\documentclass[12pt,a4paper]{article}

\usepackage{a4wide}
\usepackage{diagbox}
\usepackage{mathtools}
\usepackage{latexsym}
\usepackage{epsfig}
\usepackage{amsmath,amsthm,amssymb,enumerate}

\usepackage{hyperref}

\usepackage{color}

\def\nn{\nonumber}
\def\a{\alpha}   \def\D{\Delta}
\def\e{\varepsilon}    
  \def\k{\kappa}
 \def\th{\theta}    
 \def\m{\mu} \def\n{\nu} 
   
 \def\om{\omega}

\def\cQ{{\cal Q}}

\def\A{\forall}
\newtheorem{theorem}{Theorem}
\newtheorem{lemma}[theorem]{Lemma}

\newtheorem{claim}{Claim}

\newtheorem{conjecture}[theorem]{Conjecture}

\newcommand{\rdup}[1]{{\left\lceil #1\right\rceil }}
\newcommand{\rdown}[1]{{\left\lfloor #1\right \rfloor}}

\newcommand{\brac}[1]{\left(#1\right)}

\newcommand{\bfrac}[2]{\left(\frac{#1}{#2}\right)}

\newcommand{\rai}{\rightarrow \infty}
\newcommand{\ra}{\rightarrow}

\allowdisplaybreaks
\newcommand{\set}[1]{\left\{#1\right\}}
\def\sm{\setminus}

\def\E{\mathbb{E}}

\def\Pr{\mathbb{P}}
\def\w.h.p.{{\bf w.h.p.}}

\newcommand{\ignore}[1]{}

\def\cL{{\mathcal L}}

\def\cQ{{\mathcal Q}}

\newcommand{\beq}[2]{\begin{align}\label{#1}#2\end{align}}

\def\nn{\nonumber}

\def\Dr{\D_{r-1}}

\usepackage{tikz}
\usetikzlibrary{decorations.pathmorphing}
\usetikzlibrary{positioning}
\usetikzlibrary{arrows,automata}
\usetikzlibrary{shapes.misc}
\usetikzlibrary{backgrounds}
\usetikzlibrary{arrows,shapes}

\def\N{\mathbb{N}}

\usepackage[utf8]{inputenc}
\usepackage{graphicx}
\usepackage[short,nocomma]{optidef}
\usepackage{todonotes}
\usepackage{amsthm}
\usepackage{amssymb}
\usepackage{caption}
\usepackage{subcaption}
\usepackage{hyperref}
\usepackage{thm-restate}
\usepackage{cleveref}

\theoremstyle{plain}

\allowdisplaybreaks

\begin{document}
	
\author{
Alan Frieze\thanks{Department of Mathematical Sciences, Carnegie Mellon University, Pittsburgh PA, USA. Research supported in part by NSF grant DMS1952285. \protect\href{mailto:frieze@cmu.edu}{\protect\nolinkurl{frieze@cmu.edu}}}
\and
Ross J. Kang\thanks{Korteweg--de Vries Institute for Mathematics, University of Amsterdam, the Netherlands. Partially supported by the Dutch Research Council (NWO) grant OCENW.M20.009 and the Gravitation Programme NETWORKS (024.002.003) of the Dutch Ministry of Education, Culture and Science (OCW).  \protect\href{mailto:r.kang@uva.nl}{\protect\nolinkurl{r.kang@uva.nl}}}
\and
Aditya Raut\thanks{Department of Mathematical Sciences, Carnegie Mellon University, Pittsburgh PA, USA. \protect\href{mailto:araut@andrew.cmu.edu}{\protect\nolinkurl{araut@andrew.cmu.edu}}}
\and
Michelle Sweering\thanks{CWI, Amsterdam, the Netherlands. Supported by the Gravitation Programme NETWORKS (024.002.003) of NWO.  \protect\href{mailto:michelle.sweering@cwi.nl}{\protect\nolinkurl{michelle.sweering@cwi.nl}}}
\and
Hilde Verbeek\thanks{CWI, Amsterdam, the Netherlands. Supported by the Constance van Eeden PhD
Fellowship. \protect\href{mailto:hilde.verbeek@cwi.nl}{\protect\nolinkurl{hilde.verbeek@cwi.nl}}}
}

\date{}
\title{Coloring powers of random graphs}
\maketitle

\begin{abstract}
Given a graph $G$ and an integer $r\ge 1$, the $r$th power $G^r$ of $G$ is the graph obtained from $G$ by adding edges for all pairs of distinct vertices at distance at most $r$ from each other.
We focus on two basic structural properties of the $r$th power of the binomial random graph $G_{n,p}$, namely, the maximum degree $\Delta(G_{n,p}^r)$ and the chromatic number $\chi(G_{n,p}^r)$, and give with high probability (w.h.p.) bounds.

In the sparse case that $p=d/n$ for some fixed constant $d>0$, we prove the following.
We prove that w.h.p.~$\Delta(G_{n,p}^r) \sim \frac{\log n}{\log_{(r+1)}n}$ (where $\log_{(1)}n=\log n$ and $\log_{(r+1)}n=\log\log_{(r)}n$) and that w.h.p.~$\Delta(G_{n,p}^{\rdown{r/2}})+1 \le \chi(G_{n,p}^r) \le \Delta(G_{n,p}^{r-1})+1$. For $r=2$, we show the upper bound holds with equality.

For denser cases,
for $d$ satisfying $d=\omega(\log n)$ and $d\le n^{1/r-\Omega(1)}$ as $n\to\infty$, we have $\chi(G_{n,p}^r) = \Theta(d^r/\log d)$ w.h.p.
\end{abstract} 

\section{Introduction}

The $r$th power $G^r$ of a graph $G$ is obtained from $G$ by adding edges between all pairs of distinct vertices that are connected by a path of $r$ or fewer edges in $G$. Powers of graphs arise naturally in various contexts, such as, for instance, number theory~\cite{Pok14}, ad hoc communication networks~\cite{AgHa03}, and distributed computing~\cite{Lin92}. 
Note that $G$ has diameter at most $r$ if and only if $G^r$ is a complete graph. We consider the $r$th power $G_{n,p}^r$ of the binomial random graph $G_{n,p}$, which is the probability space of graphs on vertex set $[n]=\{1,\dots,n\}$ in which each of the $\binom{n}{2}$ pairs of vertices is included independently at random with probability $p$ as an edge.
In particular, we focus on two basic properties of $G_{n,p}^r$, namely, its maximum degree $\Delta(G_{n,p}^r)$ and its chromatic number $\chi(G_{n,p}^r)$.

(Recall that the maximum degree $\Delta(G)$ of some graph $G$ is the maximum number of neighbours a vertex in $G$ has, while the chromatic number $\chi(G)$ is the least number of colors needed in a coloring of all vertices so that no two endpoints of a vertex have the same color. We have that $\chi(G) \le \Delta(G)+1$ always.)

We seek good bounds on these parameters that hold with probability tending to $1$ as $n\to\infty$, that is, {\em with high probability (w.h.p.)}.
Let $\log_{(k)} n$ indicate the repeated application of the log-function $k$ times.  So, for example, $\log_{(3)} n = \log\log\log n$. 

In the case $r=1$, we understand quite a lot about these parameters. For example, since the degree of any vertex is a binomial random variable, it is straightforward to show that, if $p=d/n$ for some fixed constant $d>0$, then the maximum degree $\Delta(G_{n,p})$ of $G_{n,p}$ satisfies $\Delta(G_{n,p}) \sim \frac{\log n}{\log_{(2)} n}$ w.h.p.~(see e.g.~\cite[Thm.~3.4]{FrKa16} and~\cite{Bol80}).
Because it is a much more `global' parameter, the chromatic number $\chi(G_{n,p})$ of $G_{n,p}$ has been more difficult to pin down. Indeed, in the sparse regime, despite remarkable advances, see e.g.~\cite{AcNa05,CoVi16,ACG22}, it remains an open problem of Erd\H{o}s and R\'enyi~\cite{ErRe60} to determine the sharp threshold for $k$-colorability, if it exists. Nevertheless, due to celebrated work~\cite{Mat87,Bol88,MaKu90,Luc91} (see~\cite{KaMc15} for further background), we know for instance that w.h.p.
\[
\chi(G_{n,p}) \sim \frac{n}{\alpha(G_{n,p})}\sim \frac{n\log(1/(1-p))}{\log n},
\]
provided $np = \omega(1)$ (where $\alpha(G)$ denotes the independence number of $G$, the size of a largest independent set in $G$).

For $r>1$, it is intuitive that taking into consideration paths of length at most $r$ could introduce additional `non-local' complications. 
We note that the problem of determining the minimum $r$ for which $G_{n,p}^r$ is guaranteed w.h.p.~to be a clique is finely understood, see e.g.~\cite{Bol81,KlLa81,RiWo10,ABG12}.
There has also been much attention for coloring powers of not-necessarily-random graphs, see e.g.~\cite{AlMo02,AgHa03,HHMR08+,AEH13,KaPi16,KaPi18,KaLo19}.
It might be surprising how comparatively little study of $\Delta(G_{n,p}^r)$ and $\chi(G_{n,p}^r)$ for $r>1$ there is in the literature~\cite{AtFr04,GLMP23,FrRa23+}.

For the maximum degree, Garapaty, Lokshtanov, Maji and Pothen~\cite{GLMP23} proved that if $p=d/n$ where $d>0$ is constant, then w.h.p.~the maximum degree of $G_{n,p}^r$ satisfies $\D(G_{n,p}^r)=\Theta_r\left(\frac{\log n}{\log_{(r+1)} n}\right)$. They estimated the implicit multiplicative factor in the range $[0.05\cdot 2^{-r}, 6]$. We strengthen this in the following theorem.

\begin{theorem}\label{th1}
Let $p=d/n$, where $d>0$ is a constant and let $r\geq 1$ be a fixed positive integer. Then, w.h.p.~$\D(G_{n,p}^r) \sim\frac{\log n}{\log_{(r+1)} n}$.
\end{theorem}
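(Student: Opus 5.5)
We prove matching upper and lower bounds of $(1+o(1))L_r$, where $L_r:=\frac{\log n}{\log_{(r+1)} n}$. The heuristic is that a maximum-degree vertex of $G^r_{n,p}$ has a large $r$-ball not because every vertex in it has large degree, but because of a chain $v=v_0,v_1,\dots,v_{r-1}$ in which $v_{i}$ has degree roughly $k_i$, with $k_{r-1}\approx L_r$ and the earlier $k_i$ much smaller. Concretely, $k_0\approx \log_{(r)} n/\log_{(r+1)} n$ and more generally $k_i\approx \log_{(r-i)} n/\log_{(r-i+1)} n$. The cost of such a chain is $\prod_i \Pr(\mathrm{Bin}(n,d/n)\ge k_i)\approx \exp(-\sum_i k_i\log k_i)$, and $\sum_i k_i\log k_i\approx \log n$, dominated by the last term. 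The ball then has size at least $k_{r-1}\sim L_r$.

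\textbf{Lower bound.} Fix $\e>0$ and set $k_{r-1}=(1-\e)L_r$, with the earlier $k_i$ chosen as above, also scaled down by a factor $(1-\e)$. Define a vertex $v$ to be \emph{good} if it is the root of a path $v_0v_1\cdots v_{r-1}$ in which $v_i$ has at least $k_i$ neighbours outside the path, and the resulting structure (path plus pendant neighbourhoods) is a tree. The expected number of good vertices is $n\cdot n^{-(1-\e)+o(1)}\to\infty$. I would show concentration with the second moment method. Alternatively, one can explore sequentially: expose the neighbourhood of a vertex, and at each level pick the child of maximal degree. Using that among roughly $m$ independent $\mathrm{Bin}(n,d/n)$ variables the maximum is about $\log m/\log\log m$, one iterates a BFS/branching process to depth $r-1$ from many disjoint starting vertices. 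The cleanest version of this uses the branching-process structure: the number of vertices at distance $i$ is typically about $d^i$, but we need the atypical event, so I would stick with a first and second moment computation over tree-like witnesses. Since $G_{n,p}$ is locally tree-like w.h.p. (few short cycles), the overlap terms in the second moment are negligible. A good vertex has at least $k_{r-1}\ge(1-\e)L_r$ vertices within distance $r$, so $\D(G^r)\ge(1-\e)L_r$.

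\textbf{Upper bound.} This is the main obstacle. We must show that no vertex has $|B_r(v)|\ge(1+\e)L_r$. First, w.h.p. every ball $B_r(v)$ contains at most a bounded number of cycles (standard, since its size is polylogarithmic), so up to $O(1)$ extra edges it is a tree. I would then bound the probability that the $r$-ball of a fixed vertex, explored as a Galton--Watson-like process with $\mathrm{Po}(d)$ offspring (dominated by $\mathrm{Bin}(n,d/n)$), has total size at least $s=(1+\e)L_r$, aiming for $o(1/n)$ followed by a union bound. The induction on $r$ goes as follows. Let $Z_r$ be the size of the depth-$r$ tree. Then $Z_r\le 1+\sum_{j=1}^{X}Z_{r-1}^{(j)}$ with $X\sim\mathrm{Po}(d)$. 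The claim to prove inductively is a tail bound of the form $\Pr(Z_r\ge t)\le \exp\bigl(-(1-o(1))\,t\log_{(r)} t\bigr)$ in the relevant range. For $r=1$ this is the Poisson tail $\exp(-t\log t(1+o(1)))$. For the inductive step, decompose by the number of children $m$ and the sizes $t_j$ of the children's subtrees. The dominant scenario is one child subtree carrying almost all of the mass: the cost of a total of $t$ spread over many subtrees is convex-type in the sense that $t\log_{(r-1)}t$ is superadditive, so concentrating is cheapest. With $m$ children the cost is $m\log m+\sum_j t_j\log_{(r-1)}t_j$, whereas having $m\approx t$ children each of size $1$ costs $t\log t$. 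Optimising gives $t\log_{(r)}t$ up to lower order.

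This optimisation is delicate, since the lower-order terms must be controlled uniformly over compositions of $t$; I would handle it by bounding the number of compositions by $e^{O(t)}$, which is negligible relative to $t\log_{(r)}t$. Setting $t=(1+\e)L_r$ gives $t\log_{(r)}t=(1+\e+o(1))\log n$, so the probability is $n^{-1-\e+o(1)}$, and the union bound finishes the argument.
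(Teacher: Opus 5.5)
Your lower-bound witness cannot work. For $r\ge 2$ we have $L_r=\log n/\log_{(r+1)}n\gg \log n/\log_{(2)}n\sim\Delta(G_{n,p})$, so w.h.p.\ no vertex of $G_{n,p}$ has $(1-\varepsilon)L_r$ neighbours. Concretely, with $k=(1-\varepsilon)L_r$ we get $\Pr(\mathrm{Bin}(n,d/n)\ge k)=e^{-(1+o(1))k\log k}$ and $k\log k\sim(1-\varepsilon)\log n\cdot\log_{(2)}n/\log_{(r+1)}n\gg\log n$. Hence the expected number of good vertices is $n^{1-\omega(1)}\to 0$, not $n^{\varepsilon+o(1)}$. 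Your two prescriptions for $k_{r-1}$ also contradict each other. The formula $k_i\approx\log_{(r-i)}n/\log_{(r-i+1)}n$ gives $k_{r-1}\approx\log n/\log_{(2)}n$, which does make $\sum_i k_i\log k_i\approx\log n$. But then the chain puts only $O(\log n/\log_{(2)}n)=o(L_r)$ vertices in the ball. The ``follow the child of maximal degree'' exploration has the same defect.

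The missing idea is that a large $r$-ball comes from many moderately high-degree vertices spread across each level, not from one hub. Given $|N_{r-1}(v)|=\ell_{r-1}$, the level-$r$ count is roughly $\mathrm{Po}(d\ell_{r-1})$, so reaching $\ell_r\approx D$ costs only about $D\log(D/\ell_{r-1})$. The paper computes the probability $u_{\ell_1,\dots,\ell_r}$ of a BFS level profile, with $-\log u\approx\sum_i\ell_i\log(\ell_i/\ell_{i-1})$. Lemma~\ref{lem2} then identifies the optimal profile: $\ell_r\sim D$, $p_r=\ell_r/\ell_{r-1}\approx\log_{(r-1)}D$, and $p_i=p_re^{p_{i+1}}$ for $i<r$. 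For $r=2$ this means about $D/\log D$ neighbours, each with about $\log D$ children. The cost of this profile is $D\log_{(r)}D+O(D)$. Your second-moment argument has to be run on a witness of this bushy shape, not on a path.

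Your upper-bound route is genuinely different from the paper's union bound over level profiles, and it can be made to work. You dominate $|B_r(v)|$ by a depth-$r$ Galton--Watson tree with $\mathrm{Bin}(n,d/n)$ offspring, which holds for BFS regardless of cycles, so the bounded-cycles step is unnecessary. You then induct on $r$ over the children's subtrees. However, your account of the extremal case is backwards. Superadditivity of $f(s)=s\log_{(r-1)}s$ says $f(\sum_j t_j)\ge\sum_j f(t_j)$, so putting the mass into one child subtree is the \emph{most} expensive option. If it were dominant you would obtain the false bound $e^{-t\log_{(r-1)}t}$.

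The correct step uses convexity to get $\sum_j f(t_j)\ge t\log_{(r-1)}(t/m)$, and then balances this against the $m\log m$ cost of having $m$ children. The optimum has $m=t^{1-o(1)}$ children, each carrying a subtree of size $t^{o(1)}$, and gives $t\log_{(r)}t-O(t)$. Because the hypothesis is applied to these small subtrees, it must be stated uniformly. For example, take $\Pr(Z_{r-1}\ge s)\le\exp(-s\log_{(r-1)}s+O(s))$ for all $s\ge 1$, with a convention for small $s$, rather than a $(1-o(1))$ factor ``in the relevant range''.
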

\noindent
We prove Theorem~\ref{th1} in Section~\ref{Delta}.

For the chromatic number, we show in the sparse regime, with $p=d/n$ for some fixed constant $d$, that $\chi(G_{n,p}^2)$ is essentially determined by the clique in $G_{n,p}^2$ induced by the vertex of maximum degree in $G_{n,p}$ and all its direct neighbours. 
\begin{theorem}\label{th2}
Let $p=d/n$, where $d>0$ is a constant. Then, w.h.p.~$\chi(G_{n,p}^2) =\D(G_{n,p})+1$.
\end{theorem}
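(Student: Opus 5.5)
The lower bound is immediate: a vertex $v$ of maximum degree in $G=G_{n,p}$ together with its neighbourhood spans a clique of size $\D(G)+1$ in $G^2$. So the work is the upper bound $\chi(G^2)\le \D+1$, where $\D=\D(G)\sim \log n/\log_{(2)}n$. Note that Theorem~\ref{th1} with $r=2$ only gives $\D(G^2)\sim \log n/\log_{(3)} n$, which is far larger than $\D$. Greedy coloring in an arbitrary order therefore fails, and we must exploit the local sparsity of $G_{n,p}$.

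The plan is to classify vertices by degree. Set a threshold $L=\D^{1/3}$ (any $\D^{c}$ with small $c$ should work). Call a vertex \emph{heavy} if its degree in $G$ is at least $L$, and \emph{light} otherwise. A light vertex all of whose neighbours are light has $G^2$-degree at most $L^2<\D$. Such vertices can therefore be colored greedily at the very end, whatever has been done before.

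The remaining vertices are those at distance at most $1$ from a heavy vertex. We prove the following standard sparse-random-graph facts w.h.p.
\begin{enumerate}[(a)]
\item Any two heavy vertices are at distance at least $5$ (say) in $G$. This holds because the expected number of paths of length $\le 4$ joining two vertices of degree $\ge L$ is $n\cdot O(d^4)\cdot \Pr(\mathrm{Bin}\ge L)^2 = n\cdot L^{-\Omega(L)} = o(1)$, since $L\log L\gg \log n$... which fails for $L=\D^{1/3}$. So instead we take $L=\epsilon\D$ for a small constant $\epsilon$. Then $\Pr(\deg\ge \epsilon\D)=n^{-\epsilon+o(1)}$, and the expected number of such pairs at distance $\le 4$ is $n^{1-2\epsilon+o(1)}$, which is still not $o(1)$. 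So heavy vertices can be close, and we need a more refined argument.
\item Every set $S$ of $s\le \log n$ vertices spans at most $s$ edges, i.e.\ small subgraphs have at most one cycle. Consequently balls of radius $O(1)$ around any vertex are trees plus at most one extra edge.
\end{enumerate}
Using (b), we bound the structure near high-degree vertices. The expected number of connected sets containing two vertices of degree $\ge \D/3$ within distance $k$ is $n^{1-2/3+o(1)}$, while for two vertices of degree $\ge 2\D/3$ it is $n^{1-4/3+o(1)}=o(1)$. Hence any vertex $u$ has at most one neighbour of degree $>2\D/3$, plus a bounded number of neighbours of degree in $[\D/3,2\D/3]$ (more precisely, $k$ such vertices close together has probability $n^{1-k/3}$, so at most $3$). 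This gives $\deg_{G^2}(u)\le \D + 3\cdot 2\D/3 + \ldots$, which is still too big.

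So the real approach is an \emph{ordering} argument. Color heavy vertices (degree $\ge \epsilon\D$) together with their neighbourhoods first, handling each local cluster separately, and finish the rest greedily. A vertex $w$ all of whose neighbours have degree $<\epsilon\D$ and whose own degree is $<\epsilon\D$ has $G^2$-degree below $\epsilon^2\D^2$. That is too large, so the threshold has to be two-level: take $L_1=\sqrt{\D}$ for the greedy step. For the intermediate vertices, use the fact that near any vertex there are only $O(1)$ vertices of degree $\ge \sqrt{\D}$; indeed, $k$ such vertices in a small tree has probability $n\cdot e^{-k\sqrt{\D}\log\D/2}=o(1)$ once $k\gg\sqrt{\log_{(2)} n}$, and more carefully $k=O(\sqrt{\D})$ suffices. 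Each cluster (a component of the graph formed by balls around big vertices) is a tree-like structure with one dominant vertex. On a tree, or a unicyclic graph, $G^2$ can be colored with $\D+1$ colors by BFS from the root: each vertex conflicts only with its parent, grandparent, and siblings, so it sees at most $\D$ already-colored vertices. Precoloured boundaries are absorbed because the boundary vertices have small degree. The main obstacle is making this cluster-coloring compatible with the greedy phase and handling the unique cycle. I would try first to color the clusters by BFS order, then verify that light vertices see at most $\sqrt{\D}\cdot O(1)+\D^{1/2}\cdot\sqrt{\D}<\D+1$ colored neighbours.
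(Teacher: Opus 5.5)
\paragraph{Gap 1: the forest lemma is missing.} Your lower bound is the paper's. Your observation that the square of a forest of maximum degree $\Delta$ is $(\Delta+1)$-colorable in BFS order is also the paper's coloring step: a vertex sees only its parent, grandparent and earlier siblings, i.e.\ at most $\deg(\text{parent})\le\Delta$ colored vertices. What is missing is the structural lemma that makes this step applicable, and that lemma is the real content of the paper's proof. With $S$ the set of vertices of $G^2$-degree greater than $\Delta$, w.h.p.\ $G[S\cup N(S)]$ is a forest.

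Your substitute (b) does not deliver this, for three reasons.
\begin{enumerate}
\item ``Tree plus one edge'' is not enough. $C_5^2=K_5$ has maximum degree $2$ but needs $5$ colors, so a cycle inside a cluster cannot be handled by BFS; it has to be ruled out.
\item As stated (all $s\le\log n$), (b) is false for moderately large $d$. The first moment only supports $s\le c\log n$ with $c=c(d)$ small, and in any case (b) only controls bounded-radius balls.
\item Your clusters have no bounded radius. Of order $\sqrt{\Delta}$ vertices of degree $\ge\sqrt{\Delta}$ do occur close together: a vertex with $\sqrt{\Delta}$ neighbours, each of degree $\sqrt{\Delta}$, appears with probability $n^{-1/2+o(1)}$ per vertex. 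So ``$O(1)$ heavy vertices nearby'' and ``one dominant vertex'' are false, and nothing you prove excludes long cycles threading through several heavy vertices.
\end{enumerate}

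The paper rules out cycles in two steps.
\begin{itemize}
\item Short cycles: the expected number of vertices within distance $s$ of a cycle of length at most $t$ is $O(d^{s+t})=O(1)$. Hence w.h.p.\ no short cycle lies within bounded distance of a vertex with more than $\Delta$ vertices in its $2$-neighbourhood, since those vertices would all be near the cycle, while w.h.p.\ at most $\log\log n\ll\Delta$ vertices are.
\item Long cycles: in an induced cycle of length $\ell$ in $G[S\cup N(S)]$, pick $\lfloor\ell/6\rfloor$ well-separated cycle vertices. Each requires a disjoint rare event (at least $\Delta-6$ vertices within distance $3$) of probability $q=o(1)$. The first moment $\sum_\ell d^\ell q^{\lfloor \ell/6\rfloor}$ is then $o(1)$.
\end{itemize}

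\paragraph{Gap 2: the interface with the greedy phase.} Your final check ``$\sqrt{\Delta}\cdot O(1)+\Delta^{1/2}\cdot\sqrt{\Delta}<\Delta+1$'' is false: the left side is $\Delta+\Theta(\sqrt{\Delta})$. The claim that ``precoloured boundaries are absorbed'' is also unjustified. Suppose you precolor $H\cup N(H)$ ($H$ the heavy vertices) by BFS in $G[H\cup N(H)]$. Two vertices of $N(H)$ with a common neighbour outside $H\cup N(H)$ are adjacent in $G^2$, but the BFS never sees that conflict.

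The paper avoids any boundary counting by precoloring only $S$. Two vertices of $S$ at distance at most $2$ are joined by a path inside $S\cup N(S)$, so the forest sees every conflict among them. Every vertex outside $S$ has $G^2$-degree at most $\Delta$ by definition, so it can be colored greedily afterwards, whatever was done before.

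Your two-threshold scheme could be repaired in the same spirit. With $L=\sqrt{\Delta}$, every vertex outside $H\cup N(H)$ has $G^2$-degree at most $(L-1)^2<\Delta$. It would then suffice to show that the subgraph induced by the vertices within distance $2$ of $H$ is a forest; the same two-step argument gives this, because $\mathbb{P}(\deg\ge\sqrt{\Delta})\to 0$. But that forest lemma is precisely the idea your proposal lacks.
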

\noindent
We prove this in Section~\ref{chi2}. In Section~\ref{chir}, we partially generalize the result to $r\geq 2$.
\begin{theorem}\label{th3}
Let $p=d/n$, where $d>0$ is a constant and let $r\geq 2$ be a fixed positive integer. Then, w.h.p.~$\D(G_{n,p}^{\rdown{r/2}})\leq \chi(G_{n,p}^r)\leq \D(G_{n,p}^{r-1})+1$.
\end{theorem}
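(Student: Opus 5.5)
Theorem~\ref{th3} has two bounds, and I treat them separately. Throughout, $G=G_{n,p}$ with $p=d/n$.

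\textbf{Lower bound.} I use a clique in $G^r$. Let $s=\rdown{r/2}$ and let $v$ attain $\D(G^s)$. Put $B=\{v\}\cup N_{G^s}(v)$, the ball of radius $s$ around $v$ in $G$. Any two vertices of $B$ are within distance $2s\le r$ in $G$, so $B$ is a clique in $G^r$. Hence $\chi(G^r)\ge |B| = \D(G^s)+1$. This argument is deterministic.

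\textbf{Upper bound.} Here I want to show that w.h.p.\ $G^r$ is $\D(G^{r-1})$-degenerate. Equivalently, every subgraph $H$ of $G^r$ has a vertex of degree at most $\D(G^{r-1})$; a greedy coloring then uses at most $\D(G^{r-1})+1$ colors.

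\emph{Step 1 (where the degree of $v$ in $G^r$ comes from).} Fix a vertex $v$. Its $G^r$-degree is the size of its $r$-ball minus one. This exceeds its $G^{r-1}$-degree only by the number of vertices at distance exactly $r$. Every such vertex is reached through some neighbor $u$ of $v$, from which it lies at distance $r-1$.

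\emph{Step 2 (structure of sparse $G$).} I use the standard fact that w.h.p.\ every set of $k\le \log n$ vertices spans at most $k$ edges, or at most $k+c$ edges for a constant $c$, depending on what the union bound allows. So small balls are trees plus a bounded number of extra edges. Also, by Theorem~\ref{th1} (with $r-1$ and with $r$), $\D(G^{r-1})\sim \log n/\log_{(r)}n$, which is much smaller than $\log n/\log_{(r+1)}n$.

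\emph{Step 3 (peeling order).} I peel vertices in the following order. First remove vertices that are far from any high-degree structure, i.e.\ whose $r$-ball has size at most $\D(G^{r-1})$; a typical vertex has $r$-ball of size $O(1)$ or polylog-smaller. The obstacle is that vertices near a hub have large $r$-balls. Remove vertices in order of decreasing distance from the high-degree vertices. For instance, order by BFS layers of the "dense core" and peel leaves of the tree-like structure first. A leaf-like vertex $w$ (its $G$-degree among the remaining vertices is $1$, via neighbor $u$) has remaining $G^r$-neighborhood contained in the $(r-1)$-ball of $u$ in $G$, restricted to the remaining vertices. That ball has size at most $\D(G^{r-1})+1$ including $u$, which gives degree at most $\D(G^{r-1})$ excluding $w$ itself.

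The key claim to make this work is the following. In $G^r[S]$ for the remaining set $S$, we take $G[S]$ and remove, within each component, vertices of $G$-degree at most $1$ in $G[S]$. Distances in $G^r[S]$ must be computed in $G$, not $G[S]$, so I will instead peel so that $S$ is always "closed": I process the tree-like parts of $G$ from the leaves inward, so that a removed leaf's $r$-neighborhood in $G$, intersected with the remaining vertices, lies in $B_{G}(u,r-1)$. This holds if $w$ is a degree-$1$ vertex of $G$ itself. For internal vertices, it holds once their subtrees are already removed, provided I argue about paths through removed vertices: a path from $w$ of length $r$ ending at a remaining vertex $x$ must exit via $u$ unless it goes through the removed subtree and returns. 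Returning through a tree is impossible, so such paths go through $u$.

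\emph{Step 4 (cycles).} Components containing cycles are unicyclic or have bounded excess w.h.p. For vertices on the short cycles and the $2$-core, whose number of such dangerous vertices within any $r$-ball is $O(1)$, I handle them last. Their $G^r$-degree in the remaining graph is tiny, since the $2$-core restricted to small balls has bounded degree structure; the high degrees come only from trees hanging off. Here I need that high-degree vertices are not within distance $2r$ of cycles in a way that breaks the estimate, or simply absorb an $O(1)$ error. The main obstacle is this bookkeeping: the exact bound $\D(G^{r-1})+1$ leaves no slack. So for cycle vertices I will use the gap from Theorem~\ref{th1}: after the trees are removed, the remaining $2$-core vertices have $r$-balls inside the core of size $O(\log_{(2)} n)$-ish, far below $\D(G^{r-1})$.
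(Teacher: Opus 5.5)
Your lower bound is correct; it even gives $\Delta(G^{\lfloor r/2\rfloor})+1$. Your pendant-tree observation is also sound: as long as the removed vertices form trees hanging off the remaining set, a peeled leaf $w$ with remaining neighbour $u$ has all its remaining $G^r$-neighbours in $B_G(u,r-1)\setminus\{w\}$, so at most $\Delta(G^{r-1})$ of them.

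The gap is Step~4. Leaf-peeling stops at the $2$-core of $G$, and for $d>1$ that is the core of the giant component, which has linear size and linear excess. So the claim that ``components with cycles are unicyclic or have bounded excess'' is false. So is the claim that core vertices have $r$-balls inside the core of size $O(\log_{(2)}n)$. Take a vertex whose $G^r$-degree is close to $\Delta(G^r)$. Each of its many distance-$r$ descendants independently has a surviving onward branch with probability about $\rho>0$, the survival probability of the Poisson$(d)$ branching process. Hence the vertex itself and about $\rho\,\Delta(G^r)\gg\Delta(G^{r-1})$ vertices of its $r$-ball lie in the $2$-core. Such vertices can never be peeled by your rules, and since the target $\Delta(G^{r-1})+1$ has no slack, nothing can be ``absorbed''. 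Note that the local fact that small balls are trees plus a bounded number of edges does not help: the ball of such a vertex may well be a tree, yet long cycles of the giant pass through it. (For $d<1$ your plan does work, since then w.h.p.\ the $2$-core is a disjoint union of cycles with at most $\log\log n$ vertices in total.)

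The missing idea is to demand tree structure only around the \emph{high-degree} vertices, not globally in $G$. Let $S$ be the set of vertices of $G^r$-degree exceeding $\Delta(G^{r-1})$, and prove that w.h.p.\ $G[S\cup N_r(S)]$ is a forest; this is the paper's Lemma~\ref{lem2a}. Its proof has two parts:
\begin{itemize}
\item Cycles of length at most $8r$ are w.h.p.\ not near any vertex of large degree, by a first-moment count.
\item A longer cycle inside $S\cup N_r(S)$ can be shortened until it carries about $\ell/4r$ vertex-disjoint neighbourhood structures. Each structure forces a fresh vertex of $G^r$-degree about $\Delta(G^{r-1})$, an event of probability $\exp(-\Omega(\Delta(G^{r-1})))$. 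This beats the $d^{4r}$ growth per block in the number of cycles.
\end{itemize}
With this in hand, vertices outside $S$ can be removed at any time, because their full $G^r$-degree is already at most $\Delta(G^{r-1})$. The set $S$ is then coloured top-down along BFS layers of the forest. Every shortest path of length at most $r$ from $v\in S$ stays in $N_r(v)$, hence inside the forest, so your parent argument applies verbatim (Claim~\ref{claim:chr}). No global pendant-tree invariant is needed.
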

\noindent
Within the context of Theorem~\ref{th3}, it was earlier shown by Garapaty, Lokshtanov, Maji and Pothen~\cite{GLMP23} that $\chi(G_{n,p}^2) = \Theta(\Delta(G_{n,p}))$ w.h.p. Frieze and Raut~\cite{FrRa23+} later proved that the {\em list chromatic number} of $G_{n,p}^2$ satisfies $\chi_\ell(G_{n,p}^2)\sim \D(G_{n,p})$ w.h.p.

We will also show at the other end for a reasonably large range of more moderate densities --- namely, those $d=np$ satisfying both $d =\omega(\log n)$ and $d = n^{1/r-\Omega(1)}$ --- that  $\chi(G_{n,p}^r)$  is essentially determined by the independence number $\alpha(G_{n,p}^r)$ of $G_{n,p}^r$.  (Recall that the independence number $\alpha(G)$ of a graph $G$ is the maximum cardinality of an independent set of $G$, a vertex subset spanning no edges.) In this density regime, Alon and Mohar~\cite{AlMo02} proved that $\alpha(G_{n,p}^r)=O((n \log d)/d^r)$ w.h.p. In Section~\ref{sec:maindense} we will prove the following.
\begin{theorem}\label{th4}
Let $\e$ be an arbitrarily small positive constant and let $r\geq 1$ be a fixed positive integer. Let $d=d(n)$ satisfy  $d=\omega\log n$ where $\omega\to\infty$ and $d\leq n^{1/r-\e}$ as $n\to\infty$, and set $p=p(n)=d/n$. Then $\chi(G_{n,p}^r) = \Theta(d^r/\log d)$ w.h.p.
\end{theorem}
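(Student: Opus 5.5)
The lower bound is immediate from the result of Alon and Mohar~\cite{AlMo02} quoted above: w.h.p.\ $\alpha(G_{n,p}^r)=O((n\log d)/d^r)$, so $\chi(G_{n,p}^r)\ge n/\alpha(G_{n,p}^r)=\Omega(d^r/\log d)$. The substance is the upper bound $\chi(G_{n,p}^r)=O(d^r/\log d)$. For this I would use the standard tool for graphs with sparse neighbourhoods: a graph of maximum degree $\Delta$ in which every neighbourhood spans at most $\Delta^2/f$ edges has chromatic number $O(\Delta/\log f)$ (Alon--Krivelevich--Sudakov, or the later version of Vu). Applied with $\Delta=O(d^r)$ and $f=d^{\Omega(1)}$, this gives the bound.

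\textbf{Step 1 (degrees).} Since $d=\omega\log n$, Chernoff bounds and a union bound show that w.h.p.\ every vertex has degree $(1+o(1))d$ in $G_{n,p}$. Consequently the number of vertices at distance at most $r$ from any vertex is at most $(1+o(1))d^r$. Hence $\Delta(G_{n,p}^r)\le (1+o(1))d^r$.

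\textbf{Step 2 (sparse neighbourhoods).} Fix $v$ and let $N=N_{G^r}(v)$, so $|N|\le(1+o(1))d^r$. I need to show that $G^r[N]$ has at most $d^{2r-\delta}$ edges for some $\delta=\delta(\e,r)>0$.

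An edge $xy$ of $G^r[N]$ comes with paths $v\to x$ and $v\to y$ of length at most $r$ and an $x$--$y$ path of length at most $r$. Together these form a closed walk through $v$ of length at most $3r$. Generically this walk is a tree walk: $x$ and $y$ are then joined via a common ancestor $w$ on the BFS tree from $v$, with $\mathrm{dist}(v,w)+\mathrm{dist}(w,x)\le r$ and similar bounds. Counting such pairs, if $w$ is at depth $i\ge1$, then $x$ and $y$ lie within depth $r-i$ below $w$, giving about $d^i\cdot d^{2(r-i)}=d^{2r-i}\le d^{2r-1}$ pairs. When $w=v$ the constraint $\mathrm{dist}(x,y)\le r$ forces $\mathrm{depth}(x)+\mathrm{depth}(y)\le r$, giving $O(d^r)$ pairs.

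The non-tree contributions require the subgraph within distance $2r$ or so of $v$ to contain a cycle. This is where $d\le n^{1/r-\e}$ enters. A closed structure of $k$ vertices using $k$ edges through the ball has expected count about $d^{k}/n$. For cycles of length up to $3r$ we need to bound this. Simple expectation gives at most about $d^{3r}/n$, which is not small, so I would instead bound the number of extra pairs each cycle creates. Equivalently, I would show that w.h.p.\ every vertex has $O(1)$ (or $d^{r-\Omega(1)}$-bounded) "excess" within its radius-$r$ ball, using $d^{r}\le n^{1-r\e}$. For instance, the number of pairs $(x,y)$ at distance $\le r$ from each other both in $N$ via a non-tree path is bounded by counting closed walk configurations: a configuration with $k$ vertices and $k+1$ edges has expectation $n^{k}p^{k+1}=d^{k+1}/n$. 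Controlling this uniformly over $v$ (w.h.p., via Kim--Vu-type concentration or a moment/union bound using $d^r\le n^{1-r\e}$) yields at most $d^{2r}\cdot n^{-\Omega(1)}\le d^{2r-\delta}$ edges. Since $d\ge\log n$, factors like $n^{-c}$ are at most $d^{-c'}$ only when $d\le n^{O(1)}$, which holds.

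\textbf{Step 3.} Applying the sparse-neighbourhood colouring theorem with $f=d^\delta$ gives $\chi=O(d^r/(\delta\log d))$.

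The main obstacle is Step~2: uniformly, over all $n$ vertices, controlling the edges inside $N$ that arise from short cycles. Doing this needs careful enumeration of walk shapes together with a concentration argument, since expectation alone only bounds typical vertices. I would first try a union bound with high moments for each shape.
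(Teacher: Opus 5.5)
Your architecture matches the paper's. The lower bound comes from $\alpha(G_{n,p}^r)=O(n\log d/d^r)$; citing Alon--Mohar is legitimate, since the paper attributes exactly this to them, though it also re-derives it by a first-moment count of $k_0$-sets with Janson's inequality. The upper bound comes from the Alon--Krivelevich--Sudakov sparse-neighbourhood theorem applied to $G^r$ with $\Delta(G^r)\lesssim d^r$. The gap is Step~2, which is the whole content of the upper bound, and you leave it open. Your expectation count is right: for a fixed $v$ the non-tree pairs number $O(d^{3r}/n)$ on average, and $d^{3r}/n\le d^{2r-\delta}$ for $\delta\approx r^2\varepsilon$. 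But you need the bound for all $n$ vertices simultaneously, and ``Kim--Vu or high moments for each shape'' is a plan rather than an argument. It is an upper-tail problem for a family of rooted configurations whose cycles need not lie in $B_r(v)$, e.g.\ $x,y\in N_r(v)$ joined through vertices at distance $r+1$. The reformulation you call equivalent is also false. The edges inside $N_r(v)$ alone number about $d^{2r+1}/n$, which exceeds $d^r$ once $d^{r+1}>n$, and this happens in the allowed range whenever $\varepsilon<1/(r(r+1))$. So the excess of the radius-$r$ ball is neither $O(1)$ nor $d^{r-\Omega(1)}$, and bounding it would not control paths that leave the ball anyway.

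The missing idea is to avoid counting cycles altogether and use local bounds whose uniformity in $v$ comes from a plain union bound. Expose the BFS from $v$ (on the w.h.p.\ event $|N_i(v)|\le 2d^i$). For $i\le r$ and any $w$ at distance $i$ or $i+1$ from $v$, the number of $G$-neighbours of $w$ in $N_i(v)$ is dominated by $1+\mathrm{Bin}(2d^i,p)$. Its mean is at most $2d^{r+1}/n\le 2d^{1-\varepsilon}n^{-c}$ with $c=\varepsilon(r-1/r+\varepsilon)>0$, using $d\le n^{1/r-\varepsilon}$. So it exceeds $\nu_0=d^{1-\varepsilon}$ with probability $o(n^{-3})$, and a union bound over the $O(rn^2)$ triples $(v,w,i)$ covers them all. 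This is the role of the paper's two facts with $\nu_0=d^{1-\varepsilon}$. They must be read with $w$ at distance $i$ or $i+1$ from $v$; as literally stated they fail, e.g.\ $v$ itself has about $d$ neighbours in $N_1(v)$.

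Then count deterministically. Each edge $xy$ of $G^r[N(v)]$ yields a walk from $v$ of length at most $2r$: a shortest $v$--$x$ path followed by a shortest $x$--$y$ path. Walks of length at most $2r-1$ contribute $O(r\Delta^{2r-1})$. In a walk of length exactly $2r$, $x$ has depth $r$ and the endpoint $y$ has depth at most $r$. So after $x$ there is a step that either leaves a depth-$r$ vertex without going deeper, or goes from depth $r+1$ to depth $r$ (the last re-entry into $B_r(v)$). At that step there are at most $2\nu_0$ choices instead of $\Delta$. Hence every $G^r$-neighbourhood spans $O(r\Delta^{2r-1}\nu_0)=O(d^{2r-\varepsilon})$ edges. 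The sparse-neighbourhood theorem with $t\approx d^{\varepsilon}$ then gives $\chi(G^r)=O(d^r/(\varepsilon\log d))$, with no tree/non-tree case analysis and no concentration of cycle counts.
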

\noindent
One can compare this to the general bound $\chi(G^r) \le \Delta(G)^r+1$ for any $G$, keeping in mind that $\Delta(G_{n,p}) \sim d$ w.h.p.~in the range of edge densities prescribed in \Cref{th4}.  
If $d$ satisfies $d = \omega((n \log n)^{1/r})$, then w.h.p.~$G_{n,p}^r$ is a clique~\cite{KlLa81}, in which case $\chi(G_{n,p}^r)=n$, and so the upper bound in the range for $p$ above is nearly best possible.
Alon and Mohar~\cite{AlMo02} obtained similar estimates for $\chi(G^r)$ when $G$ has sufficiently large girth; see also~\cite{KaPi16,KaPi18,KaLo19}.
\section{Proof of Theorem~\ref{th1}}\label{Delta}
Given $v\in[n]=V(G), G=G_{n,p}$, let $N_t(v)$ denote the set of vertices of $G$ at distance $t$ from $v$. Now it is well-known that $\D(G_{n,p})=o(\log n)$ w.h.p., see for example~\cite{FrKa16}. As a consequence, $|N_t(v)|=o(\log^tn)$ for $t\leq r$. Let $\ell_0=1$ and let $0\leq \ell_t\leq \log^tn$ for $t=1,2,\ldots,r$. Then the probability that $|N_t(v)|=\ell_t,t=1,2,\ldots,r$, satisfies
\begin{align*}
&\Pr(d_i(v)=\ell_i \ \A i =1,2,...,r ) \\
&= \prod_{i=1}^r\binom{n-\ell_0-\ell_1-\cdots-\ell_{i-1}}{\ell_i}(1-(1-p)^{\ell_{i-1}})^{\ell_i} (1-p)^{\ell_{i-1}(n-\ell_0-\ell_1-\cdots-\ell_{r-1})} \\
&=\prod_{i=1}^r\frac{n^{\ell_i}}{\ell_i!}(\ell_{i-1}p)^{\ell_i}e^{-np\ell_{i-1}}\brac{1+O\bfrac{\log^rn}{n}}\\
& = \frac{d^{\ell_1+...+\ell_r} e^{-d(\ell_0+\ell_1+...+\ell_{r-1})}}{\ell_1!\ell_2!...\ell_r!} \cdot \ell_1^{\ell_2}\cdot\ell_2^{\ell_3}\cdots\ell_{r-1}^{\ell_r} \cdot \brac{1+O\bfrac{\log^rn}{n}}.
\end{align*}
So, using the notation $a\sim b$ to mean that $a=(1+o(1))b$ as $n\to\infty$,
\begin{align}
\Pr\brac{\sum_{i=1}^r d_i(v) = D}& = \sum_{\substack{\ell_1,...,\ell_r \\ \ell_1+...+\ell_r = D}} \frac{d^{D} e^{-D(1+D-\ell_r)}}{\ell_1!\ell_2!...\ell_r!} \cdot \ell_1^{\ell_2}\cdot\ell_2^{\ell_3}\cdots\ell_{r-1}^{\ell_r} \cdot \brac{1+O\bfrac{\log^rn}{n}}\nonumber\\
& = \brac{1+O\bfrac{\log^rn}{n}}\sum_{\substack{\ell_1,...,\ell_r \\ \ell_1+...+\ell_r = D}} u_{\ell_1,...,\ell_r},\label{uhu}
\end{align}
where 
\beq{ul}{
u_{\ell_1,...,\ell_r} = \frac{d^De^{-D(1+D-\ell_r)}}{\ell_1!\ell_2!...\ell_r!}\cdot \ell_1^{\ell_2}\cdots \ell_{r-1}^{\ell_r}.
}
Using Stirling's approximation $\log(m!) = m\log m - m + \frac12\log m+O(1)$, we have, where $L=\ell_1\ell_2\cdots\ell_r$,
\beq{bound}{
\log u_{\ell_1,...,\ell_r} = D\log d - D(D-\ell_r) - \sum\limits_{i=1}^r \ell_i \log\frac{\ell_i}{\ell_{i-1}} -\tfrac12\log L+O(r).
}
The following lemma bounds the sum in~\eqref{bound}. 	
\begin{lemma}\label{lem2}
	For $\ell_0=1$ and $\ell_1,...,\ell_r \in \N$ such that $\sum\limits_{i=1}^r \ell_i = D$, we have $\min \sum\limits_{i=1}^r \ell_i \log \frac{\ell_i}{\ell_{i-1}} \geq D\log_{(r)}D - O(D)$, for sufficiently large $D$.
\end{lemma}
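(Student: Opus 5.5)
We argue by induction on $r$, treating the last level separately from the first $r-1$. Write $F_r(D)$ for the minimum in the statement. For $r=1$ the only choice is $\ell_1=D$, so $F_1(D)=D\log D$ and the claim holds. For $r\ge 2$, fix a feasible $(\ell_1,\dots,\ell_r)$ and put $D'=\ell_1+\cdots+\ell_{r-1}$. The first $r-1$ terms form a feasible configuration for $r-1$ with total $D'$, so by induction they contribute at least $D'\log_{(r-1)}D'-O(D')$. For the last term we use $\ell_{r-1}\le D'$, which gives $\ell_r\log(\ell_r/\ell_{r-1})\ge (D-D')\log\frac{D-D'}{D'}$. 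Hence the sum is at least
\[
f(D') := D'\log_{(r-1)}D' + (D-D')\log\frac{D-D'}{D'} - O(D),
\]
and it suffices to show $f(D')\ge D\log_{(r)}D-O(D)$ for every $D'\in[1,D]$. (If $\ell_r=0$ the last term vanishes, and the argument of Case 1 applies directly.)

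\textbf{Case 1: $D'\ge D/2$.} Since $\min_{x\ge 0}x\log(x/y)=-y/e$, the last term is at least $-D'/e\ge -D$. Also $D'\log_{(r-1)}D'\ge \tfrac12 D\log_{(r-1)}(D/2)$, and $\log_{(r-1)}(D/2)\ge 2\log_{(r)}D$ for large $D$. This already exceeds $D\log_{(r)}D$.

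\textbf{Case 2: $D'\le \sqrt D$.} The last term alone is at least $(D-\sqrt D)\log(\sqrt D-1)$. This is about $\tfrac12 D\log D$, which dominates $D\log_{(r)}D$ for $r\ge 2$.

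\textbf{Case 3: $D'=D/t$ with $2\le t\le\sqrt D$.} This is the main case. Because $D'\ge\sqrt D$, we have $\log_{(r-1)}D'\ge L-O(1)$, where $L:=\log_{(r-1)}D$. For $r=2$ this reads $\log D'\ge\tfrac12\log D$, so I would instead keep $\log(D/t)=\log D-\log t$ and absorb the $-\log t/t$ term; in general the iterated logarithm of $D/t$ differs from $L$ only by a lower-order amount. We also have $(1-1/t)\log(t-1)\ge \log t-O(1)$, since $\log t/t=O(1)$. Therefore
\[
f(D')\ge D\Bigl(\frac{L}{t}+\log t\Bigr)-O(D).
\]
The function $t\mapsto L/t+\log t$ is minimized at $t=L$, where it equals $\log L+1$. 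So $f(D')\ge D\log L-O(D)=D\log_{(r)}D-O(D)$, which is the claim.

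The main obstacle is the bookkeeping in Case 3. The first-level error $\log_{(r-1)}D-\log_{(r-1)}(D/t)$ must cost only $O(D)$ in total after being multiplied by $D/t$, and not something like $D\log_{(r+1)}D$. A cruder case split that replaces $t$ by a threshold, rather than optimizing $L/t+\log t$ exactly, loses precisely such a term. Carrying the induction error as $O(D')\le O(D)$ keeps the constant in $O(D)$ from compounding, because $r$ is fixed.
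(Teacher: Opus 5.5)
Your argument is correct, up to one small repair noted at the end, and it takes a genuinely different route from the paper.

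\textbf{The paper's route.} The paper also inducts on $r$, and first disposes of the case where the first $r-1$ levels already contribute $D\log_{(r)}D$. In the remaining case ($\ell_r\ge 9D/10$) it relaxes to real $\ell_i$ and applies Lagrange multipliers. This gives $p_i=p_re^{p_{i+1}}$ for the ratios $p_i=\ell_i/\ell_{i-1}$. It then solves for $p_r\approx\log_{(r-1)}D$ from the tower expression for $D$, and checks that the top term $\ell_r\log p_r$ dominates.

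\textbf{Your route.} You discard everything about level $r-1$ except $\ell_{r-1}\le D'$. This collapses the problem to the one-parameter trade-off $L/t+\log t$ between the cost of the lower levels and the cost of the top level. Its minimum $1+\log L$ at $t=L$ is exactly where $\log_{(r)}D$ comes from.

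\textbf{Comparison.} Your argument is shorter and self-contained. A stationary-point computation only yields a lower bound once one knows the stationary point is the global minimiser of the relaxation. That is true here, since each $x\log(x/y)$ is jointly convex, but the paper does not say so. In addition, its use of $p_r\ge 9$ borrows the case constraint $\ell_r\ge 9D/10$, which the Lagrangian ignores. Your case split needs none of this. In exchange, the paper's route describes the near-minimiser: $\ell_r=D(1-o(1))$ and $\ell_r/\ell_{r-1}\approx\log_{(r-1)}D$. This is what the second-moment lower bound in Section~\ref{Delta} uses when it picks $\boldsymbol\ell_*$. From your proof that configuration can be guessed from $t\approx L$, but it would need a separate check.

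\textbf{The repair.} In Case 2, $D'$ can be too small for the inductive bound $D'\log_{(r-1)}D'-O(D')$ to be meaningful; for $r\ge 3$ and $D'=1$ it is not even defined. Bound the first $r-1$ terms directly instead: they are at least $-\sum_{i<r}\ell_{i-1}/e\ge-(1+D')/e$. This uses the same inequality $x\log(x/y)\ge -y/e$ you already invoke in Case 1. With that, the last term $(D-\sqrt D)\log(\sqrt D-1)$ carries the case as you intend.
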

\begin{proof}
	We proceed by induction on $r$. For $r=1$, the result holds since we have $\ell_1=D$, implying that $\ell_1\log\frac{\ell_1}{\ell_0} = D\log D$. Assume that the result holds for $r-1$.  

{\bf Case 1: $(D-\ell_r)\log_{(r-1)}(D-\ell_r) \geq D\log_{(r)} d$:}\\
Because $\sum\limits_{i=1}^{r-1} \ell_i = D-\ell_r$, from the induction hypothesis we have $\sum\limits_{i=1}^{r-1} \ell_i\log\frac{\ell_i}{\ell_{i-1}} \geq (D-\ell_r)\log_{(r-1)}(D-\ell_r)$. So this case is done.

{\bf Case 2: $(D-\ell_r)\log_{(r-1)}(D-\ell_r)< D\log_{(r)} D$:}\\
For $D$ sufficiently large, we have $\frac{D}{10}\log_{(r-1)} \frac{D}{10} > D\log_{(r)} D$. Hence $\ell_r \geq \frac{9D}{10}$, implying $\ell_{r-1} \leq \frac{D}{10}$ and $\frac{\ell_r}{\ell_{r-1}} \geq 9$.

We use Lagrange multipliers. But first we deal with the constraints $\ell_i\geq 0$ for $i=1,2,\ldots,r$. If $\ell_i=0$ and $\ell_{i+1}\neq 0$ then $\sum\limits_{i=1}^r \ell_i \log \frac{\ell_i}{\ell_{i-1}}=\infty$. If $\ell_i=\ell_{i+1}=\cdots=\ell_r=0$ then $\sum\limits_{i=1}^r \ell_i \log \frac{\ell_i}{\ell_{i-1}}=\sum\limits_{i=1}^{i-1} \ell_i \log \frac{\ell_i}{\ell_{i-1}}$ and the result follows by induction.  So in effect there is one constraint: $\sum\limits_{i=1}^r \ell_i = D$.

Define $\cL(\ell_1,...,\ell_r,\lambda) = \sum\limits_{i=1}^r \ell_i \log \frac{\ell_i}{\ell_{i-1}} + \lambda \left(\sum\limits_{i=1}^r \ell_i - D\right)$. By the Lagrange multiplier theorem, the minima must satisfy $\frac{\partial\cL}{\partial \ell_i} = 0$ for all $i$. Notice that $\frac{\partial \cL}{\partial \ell_i} = 1 + \log\frac{\ell_{i}}{\ell_{i-1}} - \frac{\ell_{i+1}}{\ell_i} + \lambda$ for $1 \leq i < r$, and $\frac{\partial\cL}{\partial\ell_r} = 1+\log \frac{\ell_r}{\ell_{r-1}} +\lambda$. Let $p_i := \frac{\ell_i}{\ell_{i-1}}$. From $\frac{\partial\cL}{\partial \ell_r}=0$, we have $1+\log p_r + \lambda = 0$ which implies that $p_r = e^{-(\lambda+1)}$. For $1\leq i<r$, from $\frac{\partial\cL}{\partial\ell_i}=0$ we have $1+\log p_i -p_{i+1}+\lambda = 0$ which  implies that $p_i = e^{p_{i+1}-(1+\lambda)} = p_r\cdot e^{p_{i+1}}\geq p_rp_{i+1}$. Thus, we can iteratively obtain the exact expressions for $p_1,p_2,...,p_{r-1}$ in terms of $p_r$. Now recall that $p_r\geq 9$ from induction hypothesis, hence $p_i \geq 9$ for all $i$. 
	
Since $\ell_0=1$, $\ell_1=p_1$. Now $\ell_i = p_i\cdot p_{i-1}\cdots p_1$, for all $i=1,2,...,r$ and then $\frac{\ell_r}{\ell_i} = p_rp_{r-1}...p_{i+1} \geq 9^{r-i}$. Now $\sum\limits_{i=1}^r \ell_i = D$ and so clearly $D > \ell_r$. Moreover, 
\[
D = \ell_r \left(\sum\limits_{i=1}^r \frac{\ell_i}{\ell_r}\right) \leq \ell_r \left(\sum\limits_{i=1}^r \frac{1}{9^{r-i}}\right) < \ell_r\left(\sum\limits_{j=0}^\infty \frac{1}{9^j}\right) = \frac{9}{8}\cdot \ell_r. 
\]
We can thus assume that $D = c_1 \cdot \ell_r$ for some $c_1 \in (1,9/8)$. So
\begin{align*}
D= c_1 \cdot p_r \cdot p_{r-1} \cdots p_1 & = c_1 \cdot p_r \cdot \left(p_r e^{p_r}\right) \cdot\left(p_r e^{p_r e^{p_r}}\right) \cdots \left(p_r \underbrace{e^{p_r e^{... e^{p_r}}}}_{\substack{\text{exponential tower}\\\text{of height }r-1}}\right)\\
		& = c_1 \cdot p_r^r \cdot \left(e^{p_r} e^{p_r e^{p_r}} \cdots \underbrace{e^{p_r e^{... e^{p_r}}}}_{\substack{\text{exponential tower}\\\text{of height }r-1}} \right).\\
\end{align*}
Applying the logarithmic function to both sides $(r-1)$ times, we have $p_r\leq \log_{(r-1)} D = p_r + O(\log p_r)$, implying that $\log_{(r-1)}D-c\log_{(r)}D\leq p_r\leq \log_{(r-1)}D$ for some constant $c>0$. We see from the above that $p_{r-1}=p_re^{p_r}$ and that $p_i\geq p_rp_{i+1}$. It follows that $\ell_i\leq \ell_rp_r^{i-r}$ and so 
\[
\ell_r = D(1-\eta)\text{ for }\eta \leq \frac{2}{\log_{(r-1)}D} \ra 0.
\]
 Let $T_i : = \ell_i\log\frac{\ell_i}{\ell_{i-1}}$. Then we have 
\[
\frac{T_i}{T_{i-1}}=p_i\frac{\log p_i}{\log p_{i-1}}=\frac{p_i\log p_i}{p_i+\log p_r}\geq \tfrac12\log p_i>1.
\]
It follows that for all $i<r-1$, we have $T_i <T_{r-1}$. Now $\log p_{r-1} = p_r + \log p_r$ implies that 
\[
T_{r-1} = \ell_{r-1}\log p_{r-1}=\ell_{r-1}(p_r + \log p_r)\leq \frac{\ell_r}{p_r}(p_r + \log p_r)  = O(\ell_r) = O(D).
\]
Thus, the objective is dominated by last summand $T_r = \ell_r\log\frac{\ell_r}{\ell_{r-1}}$, resulting in minimum value of at least $D\log_{(r)}D+O(D)$.
 \end{proof}

\subsection{Upper bound on $\D(G^r)$}\label{sec3.1}
Let 
\[
D_* = \frac{\log n}{\log_{(r+1)} n}.
\]
We prove that $\D(G^r) \leq D_*(1+\e)$ w.h.p., where $\e=\frac{1}{\log_{(r+1)}n}$.  
Using $u_{\ell_1,...,\ell_r}$ as in~\eqref{bound} and Lemma~\ref{lem2},
\begin{align*}
	\log u_{\ell_1,...,\ell_r} 
	&= D_* \log d - D_*(D_*-\ell_r) - \sum\limits_{i=1}^r \ell_i \log\frac{\ell_i}{\ell_{i-1}} -\tfrac12\log L+O(r) \\
	&\leq O(D_*)-\sum\limits_{i=1}^r \ell_i \log\frac{\ell_i}{\ell_{i-1}} \\
	& \leq O(D_*)-D_*(1+\e) \log_{(r)} (D_*(1+\e))\\
	&\leq -\brac{1+\frac{\e}2}\log n.
\end{align*}

Hence $u_{\ell_1,...,\ell_r} \leq \frac{1}{n^{1+\e/2}}$. Since there are at most $D_*^r$ terms in the summation, we have from~\eqref{uhu} that
$$\Pr\brac{\sum\limits_{i=1}^r d_i(v) = D} =(1+o(1))\sum_{\substack{\ell_1,...,\ell_r \\ \ell_1+...+\ell_r = D}} u_{\ell_1,...,\ell_r}\leq \frac{2D_*^r}{n^{1+\e/2}}.$$

Finally, by taking the union bound over all $n$ vertices, 
$$\Pr(\D_r(G) \geq D_*)\leq \sum_{i=1}^n \Pr\brac{ \sum\limits_{j=1}^r d_j(v_i) = D_*} \le \frac{2nD_*^r}{n^{1+\e/2}}  \ra 0.$$

\subsection{Lower bound on $\D(G^r)$}
We now use the second moment method to show that $\D_r(G) \geq D_*(1-\e)$ w.h.p. For $i=1,2,...,n$, let $X_i$ be the indicator random variable for $\sum\limits_{j=1}^r d_j(v_i) > D_{*}(1-\e)$, i.e.~the event that $v_i\in V(G)$ has degree greater than $D_*(1-\e)$ in $G^r$. Let $X_* = \sum\limits_{i=1}^n X_i$ and ${\boldsymbol \ell_*}$ be the sequence of values of $\ell_i$ which achieve the lower bound in Lemma~\ref{lem2}. 
\begin{align} 
\Pr[X_i = 1] \geq u_{\boldsymbol \ell_*} 	& = \exp\set{-D_*(1-\e)\log_{(r)} D_*(1-\e)+ O(D_*)} \nn\\
& \geq \exp\set{-(1-\e)\log n + O(D_*)}\nn\\
& \geq \frac{1}{n^{1-\e/2}}.\label{extra}
\end{align}
Then we have 
\[
\E[X_*]  \geq n^{\e/2}. 
\]
On the other hand,
$$
\begin{aligned} \Pr[X_* > 0] & \geq \frac{\E[X_*]^2}{\E[X_*^2]}  = \frac{\E[X_*]^2}{\displaystyle \sum_{i=1}^n \E\brac{X_i^2} + \sum_{i \neq j} \E\brac{X_iX_j}}\\
& \geq \frac{\E[X_*]^2}{\displaystyle \E\brac{X_*} + \E\brac{X_*} \sum_{j : d(v_1, v_j) > r} \E\brac{X_j }+\E\brac{|\set{j : d(v_1, v_j) \leq r}|}}.
\end{aligned}
$$
Here we use the fact $X_j\leq 1$ for all $j$ and that the only variables $X_j$ that are dependent on $X_1$ are for the vertices $v_j$ within a distance $r$ of $v_1$.

Now,
\[
\E\brac{|\set{j : d(v_1, v_j) \leq r}|}\leq (10d\log n)^{r+1}+n\Pr(\D(G_{n,p}\geq 10d\log n)=O(\log^{r+1}n).
\]
So
\[
\Pr[X_* > 0] \geq \frac{\E[X_*]^2}{\E[X^*]+\E[X_*]^2+O(\log^{r+1}n)}\geq \frac{1}{n^{-\e/2}+1+O(n^{-\e}\log^{r+1}n)}\to 1.
\]
\section{Proof of Theorem~\ref{th2}}\label{chi2}
\begin{proof}
For ease of reading we will denote $G_{n, p}$ with $G$ and $\Delta(G_{n, p})$ with $\Delta$ throughout the proof. Thus, we would like to show that w.h.p.~$\chi(G^2) = \Delta + 1$.
It will be important to keep in mind the known result that $\Delta \sim \log n/\log\log n$ w.h.p.
\paragraph{Lower bound.} There exists a vertex $v \in [n]$ with degree $\Delta$ in graph $G$. Let $N(v)$ denote its neighbours in $G$. Then $\{v\} \cup N(v)$ forms a clique of size $\Delta + 1$ in $G^2$. Thus $\chi(G^2) \geq \Delta + 1$. 

\paragraph{Upper bound.}
Let $S$ be the set of vertices that have degree greater than $\Delta$ in $G^2$ (or equivalently vertices that have more than $\Delta$ other vertices in their distance-2 neighbourhood in $G$). We prove the upper bound in two claims.

 The following claim is also needed in Section~\ref{chir}, where $r\geq 2$ rather than $r=2$. Given $v\in[n]=V(G)$ where $G =  G_{n,p}$, define $N_t(v)$ as the set of vertices at distance at most $t$ from $v$ in $G$. Let $N_r(S) = \bigcup\limits_{v\in S} N_r(v)$. Also, let $\D_{s}$ denote the maximum degree in $G^s$.
\begin{claim}\label{claim:chr}
	If $G[S\cup N_r(S)]$ is a forest, then $G^r$ can be vertex-colored with $\Dr+1$ colors. 
	\end{claim}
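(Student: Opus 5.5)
The plan is a two-stage greedy coloring. Here $S$ is read as the set of vertices whose degree in $G^r$ exceeds $\Dr$; for $r=2$ this is exactly the set defined above, since $\D_1=\D$. Every vertex outside $S$ has at most $\Dr$ neighbours in $G^r$. So if we first properly color $G^r[S]$ with $\Dr+1$ colors, we can then color the remaining vertices one at a time in any order. Each such vertex sees at most $\Dr$ already-used colors among its $G^r$-neighbours, so a free color always exists. The whole task therefore reduces to coloring $G^r[S]$ with $\Dr+1$ colors, and this is where the forest hypothesis enters.

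Let $F=G[S\cup N_r(S)]$, a forest by assumption. First I would check that $G^r[S]$ coincides with $F^r[S]$. Suppose $u,w\in S$ satisfy $d_G(u,w)\le r$, and take a shortest $u$--$w$ path $P$ in $G$. Every vertex of $P$ is within distance $r$ of $u$, so $P\subseteq N_r(S)$ and hence $P$ lies in $F$. Conversely, $F$ is a subgraph of $G$, so $F$-distances are at least $G$-distances. Two consequences follow. Vertices of $S$ in different components of $F$ are non-adjacent in $G^r$. Within a component, adjacency in $G^r[S]$ means $F$-distance at most $r$.

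Next, in each tree component of $F$ I fix a root and order its vertices by breadth-first search, so depth is non-decreasing along the order. I then color the vertices of $S$ greedily in this order, avoiding only colors of earlier $S$-vertices at $F$-distance at most $r$. A root has no earlier vertex in its component, so it gets any color. Now take a non-root $v\in S$ with parent $u$, and let $w$ be an earlier vertex with $d_F(v,w)\le r$. Then $w$ is not a descendant of $v$, because descendants have larger depth and come later. Hence the tree path from $v$ to $w$ passes through $u$, which gives $d_F(u,w)\le r-1$. So every earlier conflicting vertex lies in $\{u\}$ together with the set of vertices at $F$-distance between $1$ and $r-1$ from $u$.

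This last set has size at most $\Dr$: it is contained in the $G^{r-1}$-neighbourhood of $u$, because $F\subseteq G$. The set also contains $v$ itself, which is not earlier. So the number of earlier conflicting vertices, including $u$ when $u\in S$, is at most $1+(\Dr-1)=\Dr$. Consequently at most $\Dr$ colors are forbidden at $v$, and one of the $\Dr+1$ colors is free. This completes the coloring of $S$, and the first paragraph finishes the proof.

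The main obstacle is the distance bookkeeping in the third paragraph. One part is making sure $G$-distances between $S$-vertices are realised inside the forest. The other is the off-by-one count showing that $v$ itself uses up one slot of $u$'s $G^{r-1}$-neighbourhood, which is exactly what brings the bound down to $\Dr+1$ colors rather than $\Dr+2$.
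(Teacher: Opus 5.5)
Your proof is correct and follows the paper's argument. As in the paper, you color $S$ in top-down BFS order within each tree of $G[S\cup N_r(S)]$, bound the earlier conflicts at $v$ through its parent $u$ by $1+(\Dr-1)=\Dr$, and then greedily color the vertices outside $S$, whose $G^r$-degree is at most $\Dr$. You also make explicit two points the paper leaves implicit: $G$-distances of at most $r$ between $S$-vertices are realized inside the forest, and $v$ itself takes up one slot of $u$'s $G^{r-1}$-neighbourhood (this uses $r\ge 2$, which is the setting in which the claim is applied).
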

\begin{proof}
	We first claim that $S$ can be colored in $G^r$ with $\Dr+1$ colors. The neighbors of $S$ in $G^r$ are precisely $N_r(S)$, hence while coloring $S$ we can restrict our attention to the forest $G[S\cup N_r(S)]$. Assign a root to each tree in the forest. We color the $S$ vertices in this forest in a top-down approach. Assume that the top $k$ layers in a Breadth First Search starting with the root of a tree in $G[S\cup N_r(S)]$ are colored such that the coloring is proper in $G^r$. Consider a vertex $v\in S$ in the $(k+1)$-th layer. All the colored vertices which are up to distance $r$ from $v$ must be up to distance $r-1$ from its parent $u$ in the $k$-th layer. Since $\Dr$ is the maximum degree in $G^{r-1}$, $u$ has at most $\Dr-1$ already colored neighbors other than $v$ within distance $r-1$ in $G$. Thus, $v$ has at most $\Dr$ already colored neighbors among $S\cup N_r(S)$ in $G^r$ and we have an available color among $\Dr+1$ colors to assign to $v$. We thus conclude that $S$ can be colored with $\Dr+1$ colors in $G^r$. Now by the definition of $S$, all vertices outside $S$ have degree at most $\Dr$ in $G^r$. We can thus greedily color all the remaining vertices with $\Dr+1$ colors.
\end{proof}
\begin{claim}\label{claim:forest}
    $G[S \cup N(S)]$ is a forest w.h.p.
\end{claim}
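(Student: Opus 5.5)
The plan is to show that w.h.p.\ $S$ is tiny and its vertices are far apart, and that the graph is locally tree-like. Then $G[S\cup N(S)]$, which lives inside small neighbourhoods of $S$, cannot contain a cycle. Here $S$ is the set of vertices whose second neighbourhood (distance at most $2$) has more than $\Delta$ vertices. For $r=2$, $N(S)$ in the claim means $N_2(S)$, as in Claim~\ref{claim:chr}, and I will prove that version. It suffices to show that w.h.p.\ no cycle $C$ of $G$ lies in the ball of radius $2$ around $S$.

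\textbf{Step 1: how unlikely membership in $S$ is.} Fix $v$. If $v\in S$ then $d_1(v)+d_2(v)>\Delta$. W.h.p.\ $\Delta\ge (1-\e)\log n/\log_{(2)} n$, and this is a property of the whole graph. To decouple it, I fix the deterministic threshold $D_1=(1-\e)\log n/\log_{(2)}n$ and bound $\Pr(d_1(v)+d_2(v)\ge D_1)$ using \eqref{uhu}, \eqref{bound} and Lemma~\ref{lem2} with $r=2$. Lemma~\ref{lem2} gives exponent roughly $-D_1\log_{(2)}D_1\approx -(1-\e)\frac{\log_{(3)} n}{\log_{(2)} n}\cdot\log n$, which is far below $-\log n$ only by a sub-polynomial margin. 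Better: $d_1+d_2>D_1$ with $d_1\le \Delta$ forces $d_2$ to be large. The dominant configuration from the lemma has $\ell_1\approx \log D/\log\log D$ and $\ell_2\approx D$. This gives $\Pr(v\in S)\le n^{-1+o(1)}$, but more precisely $\Pr(v\in S)\le \exp(-(1-o(1))D_1\log\log D_1)=n^{-\gamma}$ for an explicit $\gamma=\gamma(n)$. What I actually need is that joint events for $O(1)$ vertices, together with a short cycle near them, have total expected count $o(1)$.

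\textbf{Step 2: first-moment count of bad structures.} A cycle in $G[S\cup N_2(S)]$ yields a connected subgraph $H$ consisting of a cycle $C$ of length $k$, together with paths of length at most $2$ joining $C$ to vertices $s_1,\dots,s_m\in S$ such that every vertex of $C$ is within distance $2$ of some $s_j$. Since each $s_j$ covers a segment of $C$ of length at most $4$ inside a tree-like ball, either $k=O(1)$ or the cycle is covered by several such balls. I will reduce to the minimal cases. Either there is a short cycle ($k\le 10$, say) within distance $2$ of one vertex of $S$, or two vertices of $S$ lie within distance $\le 4$ of each other (a cycle covered by several balls forces two $S$-vertices to be close). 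The second reduction uses that the radius-$4$ balls themselves contain no cycle.

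\begin{itemize}
\item Small cycle near one $S$-vertex: the expected number of such structures is $O(n^{k+2}p^{k+2})\cdot n^{-1}\cdot\Pr(v\in S\mid \text{structure})$. Conditioning on $O(1)$ fixed edges changes the degree tail negligibly, so this is $O(d^{k+2})\cdot n^{o(1)}\cdot n^{-1}\cdot n^{1-\delta}$-type bounds. Here I need $\Pr(v\in S)=o(1)$ polynomially, namely $n^{-1+o(1)}$, giving a total of $n^{o(1)}\cdot n^{-1}\cdot$const. Indeed, the count of cycles of length $k$ through a fixed vertex's $2$-ball is $O(\mathrm{polylog}/n)$.
\item Two $S$-vertices within distance $4$: the expected number is at most $n\cdot d^4\cdot \Pr(u,v\in S\mid \text{path})$. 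After removing the connecting path, the two second neighbourhoods are nearly independent, so this is $\approx n\,d^4\,(n^{-1+o(1)})^2=n^{-1+o(1)}\to0$.
\end{itemize}

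\textbf{Main obstacle.} The delicate point is the exponent in Step 1. I need $\Pr(d_1+d_2\ge D_1)\le n^{-1/2-\delta}$ for the pair bound to work (and decorrelation of two overlapping balls). Lemma~\ref{lem2} with $D_1$ close to $\Delta\sim\log n/\log_{(2)}n$ gives exponent $-(1-\e)\log n\cdot\log_{(2)}D_1/\log_{(2)}n\cdot(\log_{(2)} n/\log_{(2)} n)$. Because $D_1\log_{(2)} D_1 \sim \log n\cdot\log_{(3)}n/\log_{(2)}n$ is $o(\log n)$, Lemma~\ref{lem2} alone is too weak. Instead I will bound $\Pr(d_1(v)+d_2(v)>\Delta)$ directly. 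Given $d_1(v)=j\le\Delta$, $d_2(v)$ is stochastically at most $\mathrm{Bin}(n,jp)$, so exceeding $\Delta-j$ costs about $\exp(-(\Delta-j)\log((\Delta-j)/(jd)))$. Combined with $\Pr(d_1=j)\approx (d^j/j!)$, the optimum is near $j\approx\Delta/\log\Delta$ with cost $\approx\Delta\log\log\Delta\approx\Delta\log_{(3)}n$, still only $n^{-o(1)}$. So the honest bound is that $|S|$ is $n^{1-o(1)}$, which is too big. I therefore expect the intended argument to exploit that $d_1+d_2>\Delta$ together with a nearby cycle requires an extra factor $O(\log^{c}n/n)$ from the cycle. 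The pair-closeness step must then be replaced by counting cycles within distance $2$ of \emph{any} vertex with large second neighbourhood, using that the expected number of cycles of length $k$ within bounded distance of a fixed vertex is $O(\mathrm{polylog}\,n/n)$. This gives a total of $n\cdot\Pr(v\in S)\cdot\mathrm{polylog}/n$ with $\Pr(v\in S)\le n^{-c}$ needed. Establishing that polynomial saving, via the sharper Lemma~\ref{lem2} threshold at $\Delta$ itself rather than $D_1$ and via Theorem~\ref{th1}-type precision, is where I would spend the effort.
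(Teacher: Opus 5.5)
\textbf{The gap: long cycles.} Step~2 reduces a cycle covered by several $S$-balls to the event that two vertices of $S$ lie within distance $4$. But your own computation in the last paragraph is right: $\Pr(v\in S)=\exp(-\Theta(\Delta\log_{(3)}n))=n^{-o(1)}$. So $|S|=n^{1-o(1)}$, such close pairs exist w.h.p., and this cannot serve as the bad event.

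The rescue you propose, a polynomial bound $\Pr(v\in S)\le n^{-c}$ from a sharper use of Lemma~\ref{lem2}, is not merely hard but false. Lemma~\ref{lem2} is tight up to the $O(D)$ term, since the minimizing profile gives a matching lower bound exactly as in~\eqref{extra}.

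Counting cycles within distance $2$ of a \emph{single} vertex of $S$ only covers bounded-length cycles. A long cycle in $G[S\cup N_2(S)]$ need not be near any one $S$-vertex. For $d>1$ the number of cycles of length $\ell$ grows like $d^\ell$, so a bound that pays for only one nearby $S$-vertex cannot be summed over $\ell$.

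\textbf{The missing idea.} You never need $S$ to be sparse; instead you must charge for \emph{all} the $S$-balls along the cycle simultaneously. The paper proceeds as follows. It takes an induced cycle of length $\ell>6$ in $G[S\cup N(S)]$ and picks $\rdown{\ell/6}$ of its vertices pairwise at distance at least $6$ there. Each of these must have at least $\Delta-6$ vertices within distance $3$ outside the cycle, and these neighbourhoods are disjoint. Each such event has probability at most some $q=o(1)$; it suffices that the threshold tends to infinity. The expected number of bad configurations is then at most
$\sum_\ell d^\ell q^{\rdown{\ell/6}}\le d^6\sum_{m\ge 1}(d^6q)^m=o(1)$.
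So a per-vertex bound of $o(1)$, far weaker than $n^{-c}$, is all that is used.

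Short cycles are removed first via~\eqref{Zst}. W.h.p.\ at most $\log\log n<\Delta$ vertices lie within bounded distance of a cycle of bounded length, so none of them can be in $S$. Your short-cycle bullet would also work if you drop the spurious factor $n^{-1}$ and simply multiply the $O(d^{k+2})$ count by $\Pr(v\in S\mid\text{structure})=o(1)$.

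For your $N_2(S)$ reading, the same scheme applies with larger radius and spacing, as in Lemma~\ref{lem2a}. It again needs only an $o(1)$ bound per selected vertex.
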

\begin{proof}
Suppose that $s,t=O(1)$. The number $Z_{s,t}$ of vertices within distance $s$ of a cycle of length at most $t$ in $G$ satisfies 
\beq{Zst}{
\E(Z_{s,t})\leq \sum_{i=0}^{s+t}n^ii!p^i =O(d^{s+t})=O(1).
}
\noindent So Markov's inequality implies that $Z_{s,t}\leq \log \log n$ w.h.p. This implies that w.h.p.~any vertex on a cycle of length at most 6 has degree at most $\log\log n$.

We will compute an upper bound on the probability of a cycle occurring in $G[S \cup N(S)]$. Recall that vertices in $S$ have more than $\Delta$ vertices in their distance-2 neighbourhood. Thus, if a vertex is in $S$ \emph{or} in $N(S)$, it must have at least $\Delta$ vertices in its distance-3 neighbourhood in $G[S \cup N(S)]$.

Suppose, for the sake of contradiction, that $S \cup N(S)$ does not induce a forest. Then it must induce some cycle. By what we have shown, this cycle must be longer than $6$ w.h.p.

Consider an induced cycle in the induced graph $G[S \cup N(S)]$ and let $\ell$ be its length. Note that at every vertex there must be at least $\Delta$ vertices in the distance-3 neighbourhood on the induced graph. By the minimality of the cycle, it has $\lfloor \ell/ 6 \rfloor$ vertices $S'$ at distance at least 6 from each other in the induced graph. Note that in the induced graph, any vertex can only be distance 3 from at most one vertex in $S'$.

Next let $q$ denote the probability that vertex 1 has at least $\D_1=\D-6$ vertices at distance at most 3 in $G$. Then, let $k_i,i=1,2,3$ is the number of vertices in level $i$ of the breadth first search tree that contains these $\D_1$ vertices. Note that the absence of a 6-cycle implies that $k_3\geq (k_1+k_2+k_3)/3$. Then
\begin{align*}
q
&\leq \sum_{K=\D_1}^{3\D_1}\sum_{\substack{k_1+k_2+k_3=K\\k_3\geq \D_1/3}}\binom{n}{k_1,k_2,k_3,n-1-K}\bfrac{d}{n}^{k_1}\brac{1-\brac{1-\frac{d}{n}}^{k_1}}^{k_2}\brac{1-\brac{1-\frac{d}{n}}^{k_2}}^{k_3}\\
&\leq \sum_{K=\D_1}^{3\D_1}\sum_{\substack{k_1+k_2+k_3=K\\k_3\geq \D_1/3}}\frac{n^K}{k_1!k_2!k_3!}\bfrac{d}{n}^{k_1}\bfrac{dk_1}{n}^{k_2}\bfrac{dk_2}{n}^{k_3}\\
&\leq \sum_{K=\D_1}^{3\D_1}(de)^K\sum_{\substack{k_1+k_2+k_3=K\\k_3\geq \D_1/3}}\frac{1}{e^{k_3}k_3!}\\
&\leq \sum_{K=\D_1}^{3\D_1}(de)^K\binom{K-1}{2}\frac{1}{e^{K/3}(K/3)!}=o(1),
\end{align*}
since the term $(K/3)!$ grows faster than $(de)^K$.

We conclude that the probability that $G[S \cup N(S)]$ contains a cycle and the shortest cycle has length at least 6 is at most
\[
\sum_{6 \leq \ell \leq n} \mathbb{P}(S \cup N(S) \text{ contains an induced $\ell$-cycle})\leq \sum_{6 \leq \ell \leq n} n^{\ell} \cdot \bfrac{d}{n}^{\ell} \cdot q^{\rdown{\ell/6}}\leq d^6\sum_{m=1}^\infty(d^6q)^m=o(1).
\]
The factor $q^{\rdown{\ell/6}}$ arises from the fact that the vertices in $S'$ are at distance at least 6.
\end{proof}
The combination of Claim~\ref{claim:chr} (with $r=2$) and Claim~\ref{claim:forest} completes the proof of the upper bound.
\end{proof}

\section{Proof of Theorem~\ref{th3}}\label{chir}
We follow the same path as in Section~\ref{chi2}, though the calculations are  slightly more involved.  Let $S$ be the set of all vertices that have degree greater than $\Dr$ in $G^r$. For the lower bound we observe that the set of vertices at distance at most $\rdown{r/2}$ from a vertex $v$ form a clique in $G_{n,p}^{r}$. The upper bound in Theorem~\ref{th3} follows from Claim~\ref{claim:chr} and the following lemma.
\begin{lemma}\label{lem2a}
	$G[S\cup N_r(S)]$ is a forest w.h.p.
\end{lemma}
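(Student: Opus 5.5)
We mimic the proof of Claim~\ref{claim:forest}, replacing distance $2$ by distance $r$ and $\D$ by $\Dr$. Vertices of $S$ have more than $\Dr$ vertices within distance $r$. Every vertex of $S\cup N_r(S)$ lies within distance $r$ of some vertex of $S$, so it has at least $\Dr$ vertices within distance $2r$ of itself. By Theorem~\ref{th1} applied with $r-1$, w.h.p.\ $\Dr\sim \log n/\log_{(r)}n$.

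\textbf{Short cycles.} First dispose of them using \eqref{Zst}. With $s=3r$ and $t=6r$, w.h.p.\ at most $\log\log n$ vertices lie within distance $s$ of a cycle of length at most $t$. Every vertex of $S$ has at least $\Dr\gg\log\log n$ vertices within distance $r$. Hence such vertices cannot be within distance $O(r)$ of a short cycle unless their neighbourhood is mostly tree-like. More directly, a vertex near a short cycle whose ball has size at least $\Dr$ still needs a large BFS tree, and we fold this into the probability estimate below. So it suffices to rule out induced cycles of length $\ell> 6r$ in $G[S\cup N_r(S)]$.

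\textbf{Long cycles.} On such a cycle $C$, choose $\rdown{\ell/(6r)}$ vertices $S'$ pairwise at distance at least $6r$ along the cycle, hence in $G$ near the cycle w.h.p. Their distance-$2r$ balls are then disjoint apart from the cycle edges. Each of these vertices has at least $\Dr-O(r)$ vertices within distance $2r$ off the cycle. Let $q$ be the probability that a fixed vertex has at least $\D_1=\Dr-O(r)$ vertices within distance $2r$ in a tree-like exploration. The key estimate is that $q$ is small; we only need $d^{6r}q\to 0$. We would bound $q$ as in the $r=2$ case. Sum over level sizes $k_1,\dots,k_{2r}$ with $K=\sum k_i\ge \D_1$, using
\[
\binom{n}{k_1,\ldots,k_{2r}}\prod_i\bfrac{dk_{i-1}}{n}^{k_i}\le \frac{d^K}{\prod_i k_i!}\prod_i k_{i-1}^{k_i},
\]
which is essentially $u_{\ell_1,\dots,\ell_{2r}}$ from \eqref{ul}. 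By Lemma~\ref{lem2} (with $2r$ levels) this is at most $\exp\{-K\log_{(2r)}K+O(K)\}$. With $K\ge\D_1\sim\log n/\log_{(r)}n$ this is $o(1)$; in fact it is $n^{-o(1)}$ but certainly tends to $0$, which is all we need. Then the expected number of induced $\ell$-cycles of the required type is at most $n^\ell (d/n)^\ell q^{\rdown{\ell/6r}}$. Summing over $\ell$ gives at most $d^{6r}\sum_m (d^{6r}q)^m=o(1)$, as in Claim~\ref{claim:forest}. Independence across the disjoint balls is justified because we expose disjoint trees rooted at distinct cycle vertices, avoiding the cycle.

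\textbf{Main obstacle.} The delicate step is making rigorous that the $2r$-balls of the chosen cycle vertices are disjoint and tree-like, so that their probabilities multiply. It also requires handling vertices close to short cycles, where the exploration is not a tree. We would handle the latter by noting that w.h.p.\ each vertex's $2r$-ball contains at most one cycle, of length $O(r)$. Removing one edge of that cycle changes ball sizes by at most a factor $2$, so $\D_1$ can be replaced by $\D_1/2$ without affecting $q=o(1)$.
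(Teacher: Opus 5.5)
Your outline is the paper's: exclude short cycles with \eqref{Zst}, then attach to a long cycle about $\ell/(6r)$ independent ``large ball'' events of probability $q$ each, and sum $d^\ell q^{\rdown{\ell/6r}}$. Your bound on $q$, via the layer-by-layer estimate and Lemma~\ref{lem2} with $2r$ levels, is sound. In fact $q=n^{-o(1)}$ is the true order, and $q=o(1)$ is all that is used; the $n^{-(1-\e/2)}$ the paper takes from \eqref{extra} at this step is a lower bound at the larger threshold $D_*(1-\e)$.

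The genuine gap is the step you flag yourself. The independence you invoke rests on disjointness. Your claim that vertices $6r$ apart along $C$ have $2r$-balls ``disjoint apart from the cycle edges'' is false as stated. Two far-apart vertices of an induced cycle can be joined by a short path off $C$, possibly through vertices outside $S\cup N_r(S)$, so their balls in $G$ may overlap.

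The missing idea is minimality, which the paper supplies by its shortcutting argument; it is also the ``minimality of the cycle'' in Claim~\ref{claim:forest}. Put $H=G[S\cup N_r(S)]$ and take $C$ to be a \emph{shortest} cycle of $H$, so $d_H(x,y)=d_C(x,y)$ for $x,y\in C$. Use balls $B^H_s(\cdot)$ in $H$ rather than in $G$, because minimality in $H$ says nothing about paths leaving $H$.
\begin{itemize}
\item For $u\in C$ pick $v\in S$ with $d(u,v)\le r$. Then $B_r(v)$ and a shortest $u$--$v$ path lie in $H$, so $|B^H_{2r}(u)|>\Dr$.
\item The sets $B^H_{2r}(u_i)$ of your chosen vertices are pairwise disjoint, since $d_H(u_i,u_j)\ge 6r>4r$.
\item To factor against $p^\ell$, the witnesses must also avoid the edges of $C$. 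Each $x\in B^H_{2r}(u_i)\setminus C$ is reached from the last cycle vertex of a shortest $u_i$--$x$ path by a path of length at most $2r$ that meets $C$ only there. That cycle vertex lies in the arc of at most $4r+1$ vertices around $u_i$.
\item Hence some arc vertex roots a depth-$2r$ tree off $C$ with at least $D_1/(4r+1)$ vertices. This costs a factor $4r+1$ and still gives $q=o(1)$.
\end{itemize}

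Drop the tree-likeness worry and the edge-deletion remedy. The bound $\binom{n}{k_1,\dots,k_{2r}}\prod_i(k_{i-1}p)^{k_i}$ for BFS layers of sizes $k_1,\dots,k_{2r}$ holds in any graph. The events that each vertex of layer $i$ has a neighbour in layer $i-1$ involve disjoint sets of pairs.

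Likewise, the short-cycle step needs no ``unless tree-like'' caveat, and nothing needs to be folded into the later estimate. Suppose $v\in S$ were within distance $2r$ of a cycle of length at most $6r$. Then all of $B_r(v)$ would lie within distance $3r$ of that cycle, giving $Z_{3r,6r}>\Dr\gg\log\log n$, a contradiction. So w.h.p.\ $H$ has no cycle of length at most $6r$.
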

\begin{proof} 
Putting $s=r,t=8r$ in~\eqref{Zst} we see that w.h.p.~any vertex within distance $r$ of a cycle of length at most $8r$ has degree at most $\log\log n\ll \D_{r-1}$, by Theorem~\ref{th1}. This establishes that w.h.p.~the short cycles in $G$ (length up to $8r$) are at distance greater than $r$ from $S \cup N_r(S)$, the set of high degree vertices and their neighbors up to distance $r$. 
	
	We will now compute an upper bound on the probability of a cycle occurring in $G[S\cup N_r(S)]$. Recall that vertices in $S$ have more than $\Dr$ vertices in their distance-$r$ neighborhood. Thus, if a vertex is in $S \cup N_r(S)$, it must have at least $\Dr$ vertices in its distance-$2r$ neighborhood in $G[S\cup N_r(S)]$. Suppose, for the sake of contradiction, that $S \cup N_r(S)$ does not induce a forest. Then it must induce a cycle, which must be longer than $8r$ w.h.p. Consider a cycle $C$ of length $\ell > 8r$ in the induced graph $G[S\cup N_r(S)]$, on vertices $u_1, u_2,...,u_\ell$. For each $u_i$, let $v_i$ denote a vertex of $S$ which is closest to $u_i$ in $G[S\cup N_r(S)]$. For each $i=1,2,..,\ell$, let $A_i$ denote the set of vertices in $C$ within distance $2r$ of $u_i$ along $C$, namely $A_i = \{u_{i-2r},...,u_i,...,u_{i+2r}\}$ where indices are modulo $\ell$. If $N_r(v_i) \cap N_r(v_j) \neq \varnothing$ for some $u_j\notin A_i$, then we have a path of length at most $4r$ from $u_i$ to $u_j$ in $G[S\cup N _r(S)]$, namely $u_i\rightsquigarrow v_i \rightsquigarrow v_j \rightsquigarrow u_j$. Since $\ell > 8r$, we can replace the larger part of the cycle between $u_i$ and $u_j$ (strictly greater than $4r$ in length) with this path to reduce the length of the cycle. Since $G[S\cup N_r(S)]$ cannot have short cycles, eventually we obtain a cycle $C$ such that for any $u_i \in C$, $N_r(v_i)$ is disjoint from $N_r(v_j)$ for all $u_j \in C\sm A_i$. In this way we can divide $C$ into $\lfloor \frac{\ell}{4r}\rfloor$ disjoint arcs of length $4r$. The set $T$ of central vertices of these arcs satisfies the property that their respective closest neighbors from $S$ have disjoint neighborhoods up to distance $r$ in the induced graph $G[S\cup N_r(S)]$. Thus any vertex from $S\cup N_r(S)$ can be distance at most $2r$ away from at most one vertex from $T$. For any $u_i\in T$, call the path $u_i\rightsquigarrow v_i$ along with the induced graph on the neighborhood $N_r(v_i) \sm C$ the {\em neighborhood structure} of $u_i$. The neighborhood structures of all vertices in $T$ are pairwise disjoint. 
		
Now we obtain an upper bound on the probability that $G[S \cup N_r(S)]$ contains such a cycle of length $\ell > 8r$ along with the disjoint neighborhood structures at $\lfloor \frac{\ell}{4r} \rfloor$ vertices of $T$. Note that vertices in $T$ are at distance multiples of $4r$ along $C$ from other vertices in $T$. Hence, fixing which of $\{u_1,u_2,...,u_{\lfloor\ell/4r\rfloor}\}$ appears in $T$ fixes the other vertices of $T$.
We have
\begin{align*}
		&\sum_{8r\leq \ell \leq n} \Pr\brac{\text{$S\cup N_r(S)$ contains a cycle $C$ of length $\ell$ with $\lfloor \ell/4r \rfloor$ disjoint nbhd structures} } \\
		&\leq \sum_{8r\leq \ell \leq n} \binom{n}{\ell} \frac{(\ell-1)!}{2} \left(\frac{d}{n}\right)^\ell \cdot \Pr\brac{ \text{disjoint nbhd structures at $\lfloor\ell/4r\rfloor$ vertices $u_i \in C$}} \\
		&\leq \sum_{8r\leq \ell \leq n}d^\ell \lfloor\ell/4r\rfloor \cdot \Pr\brac{ \text{disjoint nbhd structures exactly at $u_{1+4r\cdot i}$, $\forall i=0,...,\lfloor \ell/4r \rfloor -1$}} \\
		& \leq \sum_{8r\leq \ell \leq n} d^{\ell} \lfloor \ell/4r\rfloor  \cdot \Pr\brac{\forall i = 0,...,\lfloor \ell/4r\rfloor -1,
		\begin{aligned}
			&  \text{ $\exists$ path $P_i : u_{1+4ri} \rightsquigarrow v_{1+4ri}$ of length $t_i\leq r$} \\ 
			& \text{and $|N_r(v_{1+4ri})| \geq \Dr-2r+t_i$}\\
			& \text{outside $C\cup P_i\cup \bigcup\limits_{j=0}^{i-1} N_r(v_j)$}
		\end{aligned}} \\
		&\leq \sum_{8r\leq \ell \leq n} d^{\ell} \lfloor \ell/4r\rfloor \cdot 
		\prod_{i=0}^{\lfloor \ell/4r \rfloor-1} \left(
		\begin{aligned} 
			& \sum_{t_i=1}^r \binom{n-\ell- \sum\limits_{k=0}^{i-1} |N_r(v_{1+4rk}) \sm C|}{t_i} t_i! \left(\frac{d}{n}\right)^{t_i} \times \\  
			& \Pr\brac{\text{$|N_r(v_{1+4ri})| \geq \Dr-2r+t_i$ outside $C \cup P_i \cup\bigcup\limits_{j=0}^{i-1} N_r(v_j)$}} 
		\end{aligned} \right) \\
		&\leq \sum_{8r\leq \ell \leq n} d^{\ell} \lfloor \ell/4r\rfloor \cdot 
		\prod_{i=0}^{\lfloor \ell/4r \rfloor-1} \left( \sum_{t_i=1}^r \binom{n}{t_i} t_i! \left(\frac{d}{n}\right)^{t_i} \times \Pr\brac{\text{$|N_r(v_{1+4ri})| \geq \Dr-2r+t_i$} }\right)\\
		&\leq2 \sum_{8r\leq \ell \leq n} d^{\ell} \lfloor \ell/4r\rfloor \cdot 
		\prod_{i=0}^{\lfloor \ell/4r \rfloor-1} \left(\brac{\frac{d^{r+1}-d}{d-1}} \times \frac{\sum\limits_{d = \Dr-2r}^{\D_r-r} d^r}{n^{1-\e/2}} \right),\qquad\text{$n^{1-\e/2}$ is from~\eqref{extra}}\\
		&\leq 2\sum_{8r\leq \ell \leq n} d^{\ell} \lfloor \ell/4r\rfloor \cdot 
		\left( \brac{\frac{d^{r+1}-d}{d-1}}\times \frac{\D_r^{r+1}}{n^{1-\e/2}} \right)^{\lfloor \ell/4r\rfloor} \\
		&\leq 8r \cdot d^{4r-1} \sum_{m=2}^\infty m \left(d^{4r}\cdot\brac{\frac{d^{r+1}-d}{d-1}} \cdot \frac{\D_r^{r+1}}{n^{1-\e/2}} \right)^{m}.
	\end{align*}
This approaches 0 as $n\rai$, since $\Dr \ll \log n \implies \frac{\D_r^{r+1}}{n^{1-\e/2}} \ra 0$.
\end{proof}
\section{Proof of Theorem~\ref{th4}}\label{sec:maindense}
In the proof, we will need to bound the density of vertex neighbourhoods in $G_{n,p}^r$ and apply the following theorem by Alon, Krivelevich and Sudakov~\cite{AKS99}. We note a recent improvement in the leading constant for this result in~\cite{DKPS20+}. 
\begin{theorem}[\cite{AKS99}, cf.~\cite{DKPS20+}] \label{thm:sparsenbhd}
There exists a constant $c$ such that for every graph $H$ of maximum degree $\Delta$ and real number $t$ it holds that if
\begin{itemize}
    \item $2 \leq t \leq \Delta$, and
    \item for all $v \in V(H)$ the neighbourhood of $v$ spans at most $\Delta^2/t$ edges,
\end{itemize}
then $\chi(H) \leq c\Delta / \log t$.
\end{theorem}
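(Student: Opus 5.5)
Since this is a black-box result quoted from~\cite{AKS99}, one natural route is to re-derive it by a semi-random (``nibble'') colouring in the style of Johansson and Kim. The local sparsity hypothesis replaces the triangle-freeness used there. We may assume $t$ exceeds a large constant, since otherwise $\chi(H)\le\Delta+1=O(\Delta/\log t)$ trivially. We may also assume $t\le\Delta^{2}$, since beyond that every neighbourhood is independent; in particular $\log t=O(\log\Delta)$. Throughout, $\Delta$ denotes the maximum degree of the graph $H$ in the statement, not the quantity of earlier sections.

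\emph{Setup.} Fix a palette of $k=C\Delta/\log t$ colours, with $C$ a large absolute constant. Every vertex $v$ keeps a list $L_v$ of available colours, initially the whole palette. For each colour $c$ we track the \emph{colour degree} $d_c(v)$, the number of uncoloured neighbours $u$ of $v$ with $c\in L_u$.

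\emph{One round of the nibble.} Each uncoloured vertex is activated with a small probability $\eta$ and, if activated, picks a uniformly random colour from its list. A vertex keeps its colour if no neighbour picked the same colour. Each vertex then deletes from its list every colour retained by a neighbour; an equalising coin flip makes every list shrink by essentially the same factor.

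The key local estimate is that, in one round,
\[
\mathbb{E}\bigl[|L_v|\bigr]\approx |L_v|\,e^{-\eta d/|L|}
\qquad\text{and}\qquad
\mathbb{E}\bigl[d_c(v)\bigr]\lesssim d_c(v)\,(1-\eta)\,e^{-\eta d/|L|}\cdot(\text{correction}).
\]
Here $d/|L|$ is the current typical ratio of colour degree to list size. The point is that colour degrees fall faster than list sizes by an extra factor of roughly $1-\eta$. In the triangle-free case this holds because the events ``neighbour $u$ of $v$ keeps a colour'' are nearly independent. Here $N(v)$ is not independent, but it spans at most $\Delta^2/t$ edges. So an average neighbour of $v$ has only about $\Delta/t$ neighbours inside $N(v)$, and the correlation error in these expectations is a $1/t$-fraction. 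This should be affordable for roughly $\log t$ rounds.

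\emph{Iteration and finish.} Iterating the round, the ratio $d_c(v)/|L_v|$ decreases geometrically. After about $\eta^{-1}\log t$ rounds every colour degree is at most $|L_v|/10$, and the lists still have size at least $\Delta^{\Omega(1)}$. At that point a standard final step completes the colouring: either a Lovász Local Lemma argument, or the fact that a list assignment in which every colour degree is at most $|L_v|/(2e)$ admits a proper list colouring (Reed's/Haxell's theorem).

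\emph{Concentration.} After each round we must show that $|L_v|$ and every $d_c(v)$ stay close to their expectations simultaneously for all $v$ and $c$. Each quantity depends only on the random choices in the second neighbourhood of $v$. So Talagrand's inequality (or Azuma applied to a modified, Lipschitz version of the random variables) gives deviations of order $\sqrt{\Delta}\,\mathrm{polylog}\,\Delta$, and the Local Lemma then makes all the good events hold at once.

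\emph{Main obstacle.} I expect the hardest step to be controlling the sparsity error over all $\Theta(\eta^{-1}\log t)$ rounds. The sparsity of $N(v)$ must be used in a form that persists as vertices are coloured and lists shrink. To do this I would track, in addition, for each $v$ and $c$ the number of edges inside $N(v)$ whose two endpoints both still have $c$ available, and show that this count shrinks at least as fast as $d_c(v)^2$. If that bookkeeping proves unwieldy, my fallback is to first randomly partition $V(H)$ into $\Delta/t^{1/2}$ parts. Within each part the degrees are about $t^{1/2}$, and a first-moment estimate shows that the neighbourhoods become nearly edgeless relative to the new degree. Applying the triangle-free-type nibble to each part with its own palette then costs only a constant factor in the final bound $c\Delta/\log t$.
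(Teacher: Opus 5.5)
The paper gives no proof of this statement. It imports it from \cite{AKS99}, citing \cite{DKPS20+} for a better constant, and uses it only as a black box in the upper bound of Theorem~\ref{th4}. Your outline therefore has to stand on its own, and it has a genuine gap at the concentration step. The gap is present already in the triangle-free special case.

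Bounds of order $\sqrt{\Delta}\,\mathrm{polylog}\,\Delta$ for each individual $d_c(v)$ need every single random choice to move $d_c(v)$ only a little. That holds when codegrees are bounded, which is Kim's girth-five setting, but neither local sparsity nor triangle-freeness gives it. Take $K_{\Delta,\Delta}$, which satisfies the hypothesis with $t=\Delta$; the vertices on each side are twins. After a round, for $v$ on the left and $c\in L_v$, the right side has either lost $c$ entirely or kept it entirely. The second outcome has probability about $\mathrm{Keep}$, which is close to $1$ for small $\eta$. So $d_c(v)$ has an essentially two-point distribution and is never near $\mathbb{E}[d_c(v)]$. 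For most colours it misses the target $d_c(v)(1-\eta)e^{-\eta d/|L|}$ by the full factor $e^{\eta d/|L|}$. That factor outweighs the gain $1-\eta$ as long as $d/|L|$ is bigger than about $1$. The per-colour invariant you want to maintain with the Local Lemma therefore fails for most pairs $(v,c)$. No Lipschitz modification can repair this, because the true distribution is two-point. The known arguments for triangle-free graphs get around it by controlling averages over colours, or weighted versions of the lists, rather than each $d_c(v)$; your sketch has no such device.

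The same twin phenomenon refutes your proposed sparsity invariant. Blow up a $D$-regular triangle-free graph, replacing vertices by cliques $K_m$ and edges by copies of $K_{m,m}$; the hypothesis holds with $t=\Theta(D)$. If $a$ of the blobs adjacent to $v$'s blob still carry $c$, then $e_c(v)/d_c(v)^2\approx 3/(2a)$, which grows as $a$ shrinks. So the number of $c$-edges in $N(v)$ falls by one keep-factor per round, not two. That weaker decay may be affordable, costing about $t^{1/C}$ over the whole process, but the invariant must be stated to allow it.

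Your fallback does not bypass the obstacle either.
\begin{itemize}
\item \emph{Sparsity in the parts.} That $N(v)\cap V_i$ spans at most one edge holds only in expectation. You need it at every vertex, and since $n$ is unbounded in terms of $\Delta$ this requires tail bounds of order $\Delta^{-O(1)}$ for the Local Lemma. These fail. The hypothesis allows $N(v)$ to contain a complete bipartite graph between sets $A$ and $B$ with $|A|=\Delta/t$ and $|B|\approx\Delta$, while every other neighbourhood stays sparse enough. With probability about $|A|\,t^{1/2}/\Delta=t^{-1/2}$, some vertex of $A$ lands in $v$'s part. It brings a star with about $t^{1/2}$ edges, which is linear in the new degree.
\item \emph{Consequence.} The parts are not triangle-free, so Johansson's theorem cannot simply be quoted for them. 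You are back to a sparse-neighbourhood nibble, with all the difficulties above.
\item \emph{Degrees in the parts.} When $t^{1/2}\ll\log\Delta$, the part degrees (mean $t^{1/2}$) are not concentrated at the $\Delta^{-O(1)}$ level. A single partition then does not even control the maximum degree, so it has to be iterated.
\end{itemize}

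The proposal names the right tools but does not yet prove the statement; within the paper, the statement rests entirely on \cite{AKS99}.
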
 
Again, we denote $G_{n, p}$ by $G$ and $\Delta(G_{n,p})$ by $\Delta$ for simplicity. 

It is known that w.h.p.~$p=\om\log n$ implies that w.h.p.~all degrees in $G$ are $\sim np$, see for example Theorem 3.4(ii) of~\cite{FrKa16}.  (Here $A\sim B$ means that $A=(1+o(1))B$ as $n\to \infty$ and $A\lesssim B$ means $A\leq (1+o(1))B$.) 
\subsection{Upper bound}
Fix $v\in [n]$ and let $N_i=N_i(v)$ denote the vertices at distance $i$ from $v$ in $G$ and let $N(v)$ denote the neighbors of $v$ in $G^r$. Then,
\beq{up1}{
|N(v)| =\sum_{i=1}^r|N_i(v)|\leq  \sum_{i=1}^r\D^i \sim \D^r.
}
On the other hand, we have from the Chernoff bounds, that with $\th=1/(np)^{1/2}$ and $i\geq 2$,
\beq{up2}{
&\Pr\brac{|N_i(v)|\leq (1-\th)np|N_{i-1}(v)|\;\bigg|\; |N_{i-1}(v)|\geq \bfrac{np}{2}^{i-1}}\nonumber\\
&\leq \exp\set{-\frac{\th^2(n-\D^{i-1})(np/2)^{i-1}p}{4}}=o(n^{-1}).
}
So w.hp., the upper bound in~\eqref{up1} is also an asymptotic lower bound. We need two more facts. Let $\n_0=d^{1-\e}$.
\begin{align}
&\text{W.h.p., for all $v\in[n], 1\leq i\leq r$ and $w\notin N_i(v)$, $|\set{u\in N_i(v):\set{u,w}\in E(G)}|\leq \n_0.$}\label{up3}\\
&\text{W.h.p., for all $v\in[n]$ and $w\in N(v)$, $|\set{u\in N(v):\set{u,w}\in E(G)}|\leq \n_0$.}\label{up4}
\end{align}
Indeed,
\begin{align*}
\Pr(\neg\eqref{up3})&\leq n^2\binom{\D^i}{\n_0}p^{\n_0}\leq n^2\bfrac{\D^ipe}{\n_0}^{\n_0}\leq n^2\bfrac{2d^{r+\e}e}{n}^{d^{1-\e}}\\&\leq n^2\bfrac{2en^{(r+\e)(1/r-\e)}}{n}^{d^{1-\e}}=o(1).\\
\Pr(\neg\eqref{up4})&\leq n^2\binom{\D^r}{\n_0}^{\n_0}p^{\n_0}\leq n^2\bfrac{2d^{r+\e}e}{n}^{\n_0}=o(1).
\end{align*}
It follows from~\eqref{up3} and~\eqref{up4} that the number of paths of length at most $r$ in $G^r$ that join pairs of vertices in $N(v)$ is at most $\D^{2r-1}\n_0$. Note that when we trace a path between a pair of vertices, there are always at most $\D$ choices for the next vertex and there is at least one time where the number of choices is at most $\n_0$. Because $\D/\n_0\gtrsim d^{\e/2}$, we can apply Theorem~\ref{thm:sparsenbhd} with $t\sim d^{\e/2}$. Thus w.h.p., 
\[
\chi(G^r)\lesssim \frac{2cd^r}{\e\log d}.%\qedhere
\]
%\end{proof}
\subsection{Lower bound}
Let $X_k$ denote the number of  $k$-sets in $G^r$ for which no pair of vertices are connected by a path of length $r$. We show that $X_k=0$ w.h.p., for $k\geq k_0=\a n\log d/d^r$ where $\a=10r!$. Now
\beq{Janson}{
\E(X_{k_0})\leq \binom{n}{k_0}\exp\set{-\frac{\m^2}{2(D+\m)}}
}
where, given a fixed $k_0$-set, the expected number of number of paths $\m$ satisfies
\[
\m=\binom{k_0}{2}\binom{n-2}{r-1}p^r\sim \frac{\a d^r\log d}{2(r-1)!}\text{ and }D=\sum_{P\sim Q}\Pr(P,Q),
\]
where $P,Q$ run over all distinct pairs of paths of length $r$ that (i) join  a pair of vertices in $[k_0]$ and (ii) share at least one edge.

Inequality~\eqref{Janson} is an example of Janson's inequality~\cite{Jan90}. Now denote by $\cQ$, the set of paths $Q$ that share an edge with the path $P_0=(1,2,\ldots,r+1)$ in $K_n$. Then, where in the following, $i$ denotes the number of edges that a path $Q$ shares with $P_0$ and $j$ denotes the number of common vertices, 
\[
D=\m\sum_{Q\in \cQ}\Pr(Q\mid P_0)\leq \frac{\m}2\sum_{i=1}^{r-1}\sum_{j=i+1}^{r}\binom{r}{i}\binom{k_0}{r+1-j}p^{r-i}\leq \frac{r\m}2\sum_{i=1}^{r-1}\binom{k_0}{r-i}p^{r-i}\leq r\m k_0p\leq \m.
\]
Applying~\eqref{Janson},
\[
\E(X_{k_0})\leq \bfrac{ne}{k_0}^{k_0}\exp\set{-\frac{\m}{4}}\leq \brac{\frac{ed^r}{\a\log d}\cdot d^{-\a/4}}^{k_0}=o(1).
\]
Thus, w.h.p., there are no independent sets of size $k_0$ in $G^r$, in which case we have that w.h.p.
\[
\chi(G^r)\geq \frac{n}{k_0}=\frac{d^r}{4r\log d}.
\]
This completes the proof of Theorem~\ref{th4}.\hfill\qed
\section{Concluding remarks}
As hinted at in the discussion preceding their statements, Theorems~\ref{th3} and~\ref{th4} might be representative of two nontrivial regimes in the behaviour of $G_{n,p}^r$, one in which the chromatic number is locally determined, and the other in which it is more globally determined.

Writing $\omega(G)$ for the clique number of a graph $G$, we pose the following conjecture to make concrete this allusion.

\begin{conjecture}\label{conj:main}
Let $p=d/n$, where $d>0$ is a constant and let $r\geq 1$ be a fixed positive integer.  Then, w.h.p.~$\chi(G_{n,p}^r) = \Theta(\max\{\omega(G_{n,p}^r),n/\alpha(G_{n,p}^r)\})$.
\end{conjecture}

\noindent
The second term in the maximization coincides with the bound of Theorem~\ref{th4}. The first corresponds with the bound in Theorem~\ref{th3}, as well as the (higher) densities for which $G_{n,p}$ has diameter at most $r$ w.h.p. The main challenge in this conjecture is to determine the transitions between these three density regimes, which we leave to future investigations. It would also be interesting to find the leading constant implicit in the $\Theta$ (assuming the conjecture holds), in particular whether it could be (asymptotically) 1.

Our conjecture suggests that, at least  in the sparse regime, the correct value is closer to the lower bound in Theorem~\ref{th3}, due to the following observation.
\begin{lemma}\label{omom}
Let $p=d/n$, where $d>0$ is a constant and let $r\geq 1$ be a fixed positive integer. Then, w.h.p.~$\omega(G_{n,p}^r)\leq \Delta(G_{n,p}^{\rdup{r/2}})+1$.
\end{lemma}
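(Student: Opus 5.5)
The plan is to use the fact that, in the sparse regime, the neighbourhood of any clique of $G^r$ in $G$ is locally a tree. This forces every clique of $G^r$ to lie within distance $\rdup{r/2}$ of a single vertex. Throughout, write $G=G_{n,p}$.

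\emph{Local tree structure.} By \eqref{Zst}, with $s=r$ and $t=2r+1$, w.h.p.\ every vertex within distance $r$ of a cycle of length at most $2r+1$ has degree $O(\log\log n)$ in $G^r$. Indeed, the number of such vertices is at most $\log\log n$, and they are confined to the cycle and its neighbourhood. More carefully, we take $Z_{s,t}$ with $s=2r$, and note that w.h.p.\ no two short cycles lie within distance $O(r)$ of each other, so the local structure is unicyclic with bounded size.

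\emph{Cliques far from short cycles.} Let $K$ be a clique in $G^r$. If some vertex of $K$ is within distance $r$ of a cycle of length at most $2r+1$, then all of $K$ lies within distance $2r$ of that cycle. Then $|K|$ is at most the number of vertices within distance $2r$ of a short cycle, which is $O(\log\log n)$ w.h.p. This is far smaller than $\Delta(G^{\rdup{r/2}})\sim \log n/\log_{(\rdup{r/2}+1)}n$ by Theorem~\ref{th1}. Otherwise, the union $U$ of the geodesics between pairs of vertices of $K$ induces a tree in $G$. Any cycle in $U$ would have length at most $2r+1$, since it is formed from two or three geodesics of length at most $r$; this needs a check, but I would bound the length by $3r$ and enlarge $t$ in \eqref{Zst} accordingly.

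\emph{Helly argument.} We now have a set $K$ of vertices in a tree $T$ with pairwise distances at most $r$. Take two vertices $a,b\in K$ at maximum distance $D\le r$, and let $c$ be the midpoint of the $a$--$b$ path, rounding so that $c$ is at distance $\rdup{D/2}$ from $a$ and $\rdown{D/2}$ from $b$. The standard center argument for trees then gives every $x\in K$ distance at most $\rdup{r/2}$ from $c$. Suppose $x$ is farther. Where $x$ attaches to the $a$--$b$ path, it lies on the $a$-side or the $b$-side of $c$, or hangs off $c$ itself. It is then farther than $D$ from $b$ or from $a$ respectively, contradicting the maximality of $D$. Hence $K\subseteq\{c\}\cup N_{\le \rdup{r/2}}(c)$, so $|K|\le \Delta(G^{\rdup{r/2}})+1$. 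Since distances in $T$ equal distances in $G$ on $U$, this holds in $G$ too.

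The main obstacle is the second step: ensuring that the union of geodesics is genuinely a tree, and that tree distances match $G$-distances. I would first handle this by choosing $t=3r$ and $s=2r$ in \eqref{Zst}, so that every vertex near a short cycle has only $O(\log\log n)$ vertices within distance $2r$. Cliques touching such regions are then negligibly small, while cliques avoiding them see only tree-like balls of radius $r$ around each member.
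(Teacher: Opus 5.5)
Your overall architecture (tree-likeness near the clique, then the centre argument) is sound, and the centre argument is correct. However, the step you flag as the main obstacle is the crux, and what you write does not establish it. The justification that a cycle of $G[U]$ ``is formed from two or three geodesics'' is not valid: a cycle of $G[U]$ can run through pieces of many geodesics and through edges of $G$ joining different geodesics. Nor does it help that the radius-$r$ balls around the members of $K$ induce trees. For $r=2$, three alternate vertices of a $6$-cycle are pairwise at distance $2$ and each of their radius-$2$ balls induces a path, yet $U$ is the whole $6$-cycle.

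What does work is a single dichotomy. Fix $a\in K$. Each vertex of $U$ is within $\lfloor r/2\rfloor$ of a member of $K$, so $U\subseteq B(a,\lfloor 3r/2\rfloor)$. Suppose $G[U]$ has a cycle. Then one of its edges $xy$ is not in a breadth-first search tree from $a$, and the fundamental cycle of $xy$ has length at most $d(a,x)+d(a,y)+1\le 3r+1$ and passes through $x\in U$. Hence all of $K$ lies within distance $r+\lfloor r/2\rfloor\le 2r$ of a cycle of length at most $3r+1$. By \eqref{Zst} and Markov, $|K|\le Z_{2r,3r+1}\le\log\log n<\Delta(G)+1\le\Delta(G^{\lceil r/2\rceil})+1$ w.h.p. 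Otherwise $G[U]$ is acyclic and connected, hence a tree containing a $G$-geodesic between every two members of $K$. So $d_{G[U]}=d_G$ on $K$, which is exactly what your centre argument uses. Thus take $t=3r+1$, and your first two steps collapse into this one.

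With that repair, your proof takes a different route from the paper's. The paper works with a maximum clique $W$. It uses Theorem~\ref{th1} to get $|W|=o(\log n)$, and the first-moment bound \eqref{EQ1} to show $W$ spans at most $|W|$ edges. It then finds in $W$ a vertex of degree at least $s_1$ and uses \eqref{EQ2} to show such vertices are not within distance $r$ of a cycle of length at most $2r$. Finally it declares $W$ tree-like and quotes radius $=\lceil \mathrm{diam}/2\rceil$.

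You need only the crude count \eqref{Zst} of vertices near short cycles, together with $\Delta(G)\gg\log\log n$: cliques near short cycles are automatically too small. So you need no high-degree vertex, no edge count and no maximality, and the bound holds for every clique at once. Working with the union of geodesics also makes explicit two points the paper's sketch passes over quickly. First, a clique of $G^r$ need not be connected in $G$, so connecting paths must be added before one has a tree whose distances agree with those of $G$. Second, as the $6$-cycle example shows, cycles of length up to about $3r$ (not only $2r$) near the clique must be ruled out. This costs nothing in either approach, since \eqref{Zst} and \eqref{EQ2} hold for any constant cycle length and distance.
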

\begin{proof}
Let $W$ denote a maximum clique in $G^r_{n,p}$. Note that $\D(G^\rdup{(r-1)/2}_{n,p})\leq |W|\leq \D(G^r_{n,p})+1=o(\log n)$.  We argue now that since $|W|=s_0=o(\log n)$, w.h.p.~$W$ contains at most $s_0$ edges. The probability that our assertion fails to be true is at most
\beq{EQ1}{
  \binom{n}{s_0}\binom{\binom{s_0}{2}}{s_0+1}\bfrac{d}{n}^{s_0+1}\leq \bfrac{ne}{s_0}^{s_0}\bfrac{s_0e}{2}^{s_0+1}\bfrac{d}{n}^{s_0+1}
= \frac{e^{2s_0+1}d^{s_0+1}s_0}{2^{s_0+1}n}=o(1).
}
We next observe that $W$ must contain at least one vertex of degree at least
\[s_1=\D(G^\rdup{(r-1)/2}_{n,p})^{1/r}/2.\] Then we claim that $G_{n,p}$ contains no vertex $v$ of degree at least $s_1$ that is within distance $r$ of a cycle of length at most $2r$. The probability that our assertion fails to be true is at most
\beq{EQ2}{
n\cdot\sum_{s\geq s_1}\binom{n-1}{s}p^s\cdot s\cdot \sum_{k=1}^rn^kp^k\cdot\sum_{\ell=3}^{2r}n^{\ell-1}p^\ell\leq 2rd^{3r}\sum_{s\geq s_1}\frac{d^s}{(s-1)!}=o(1).
}
We reason for this as follows: $n$ choices for $v$; then sum over $s$ a bound on the probability of $s$ neighbors; then $s$ choices for the start of a path to a cycle $C$; then sum over $k$ a bound on the probability of there being a path of length $k$ to $C$; then sum over $\ell$ a bound on the probability of a cycle $C$. 

It follows from \eqref{EQ1} and \eqref{EQ2} that $W$ induces a tree in $G_{n,p}$. Following this we can use the fact that in a tree, the radius is $\rdup{D/2}$, where $D$ is the diameter. So, we let $T$ be any spanning tree of $W$ and take $D=r$.
\end{proof}
\paragraph{Open access statement.} For the purpose of open access,
a CC BY public copyright license is applied
to any Author Accepted Manuscript (AAM)
arising from this submission.

\bibliography{references}
\bibliographystyle{abbrv}
\end{document}